\newtheorem{theorem}{Theorem}[section]
\newtheorem{corollary}[theorem]{Corollary}
\newtheorem{lemma}[theorem]{Lemma}
\title{Spanning tree packing and 2-essential edge-connectivity}
\author{Xiaofeng Gu$^{1}$, Runrun Liu$^{2}$,      Gexin Yu$^{3}$}
\address{
$^{1}$\small Department of Computing and Mathematics, University of West Georgia, Carrollton, GA 30118, USA.\\
$^{2}$\small School of Mathematics, Zhejiang Normal University, Jinhua, Zhejiang 321004, China.\\
$^3$\small Department of Mathematics, William \& Mary, Williamsburg, VA 23185, USA.
}
\thanks{The research of the first author was supported by a grant from the Simons Foundation (522728, XG). The research of the second author was supported in part by NSFC(12101563), ZJNSFC(LQ22A010011) and the China Postdoctoral Science Foundation (2022M712834).}
\email{xgu@westga.edu, rliu1206@zjnu.edu.cn, gyu@wm.edu}
\keywords{spanning tree, essential edge-connectivity, essential connectivity, Hamilton-connectedness}
\subjclass{05C70, 05C40, 05C45}
\date{\today}
\begin{document}

\maketitle

\begin{abstract}
An edge (vertex) cut $X$ of $G$ is $r$-essential if $G-X$ has two components each of which has at least $r$ edges. A graph $G$ is $r$-essentially $k$-edge-connected (resp. $k$-connected) if it has no $r$-essential edge (resp. vertex) cuts of size less than $k$. If $r=1$, we simply call it essential. Recently, Lai and Li proved that every $m$-edge-connected essentially $h$-edge-connected graph contains $k$ edge-disjoint spanning trees, where $k,m,h$ are positive integers such that $k+1\le m\le 2k-1$ and $h\ge \frac{m^2}{m-k}-2$. In this paper, we show that every $m$-edge-connected and $2$-essentially $h$-edge-connected graph that is not a $K_5$ or a fat-triangle with multiplicity less than $k$ has $k$ edge-disjoint spanning trees, where $k+1\le m\le 2k-1$ and $$h\ge f(m,k)=\begin{cases}
2m+k-4+\frac{k(2k-1)}{2m-2k-1}, & m< k+\frac{1+\sqrt{8k+1}}{4}, \\
m+3k-4+\frac{k^2}{m-k}, &  m\ge k+\frac{1+\sqrt{8k+1}}{4}.
\end{cases}$$
Extending Zhan's result, we also prove that every 3-edge-connected essentially 5-edge-connected and $2$-essentially 8-edge-connected graph has two edge-disjoint spanning trees. As an application, this gives a new sufficient condition for Hamilton-connectedness of line graphs. In 2012, Kaiser and Vr\'ana proved that every 5-connected line graph of minimum degree at least 6 is Hamilton-connected. We allow graphs to have minimum degree 5 and prove that every 5-connected essentially 8-connected line graph is Hamilton-connected.
\end{abstract}

\section{Introduction}
We consider finite graphs with possible parallel edges but no loops. The {\it multiplicity} of a graph is the maximum number of edges between any pair of vertices in the graph.
The famous packing spanning trees theorem by Tutte~\cite{Tutte61} and independently Nash-williams~\cite{Nash61} implies that $2k$-edge-connected graphs contain $k$ edge-disjoint spanning trees.  As a graph with $k$ edge-disjoint spanning trees may contain vertices of degree less than $2k$, people have tried to explore other conditions to pack spanning trees.  One of them is from Nash-Williams~\cite{Nash64}. Note that a graph is nontrivial if it contains at least one nonloop edge.

\begin{theorem}[Nash-Williams~\cite{Nash64}]\label{thm:nash}
If $|E(G)|\ge k(|V(G)|-1)$, then $G$ has a nontrivial subgraph with $k$ edge-disjoint spanning trees.
\end{theorem}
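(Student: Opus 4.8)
I would argue by induction on $n := |V(G)|$, noting that we may assume $n \ge 2$, since otherwise $G$ has no nontrivial subgraph at all. The base case $n = 2$ is immediate: the hypothesis forces at least $k$ parallel edges between the two vertices, and any $k$ of them form $k$ edge-disjoint spanning trees of the (nontrivial) graph $G$ itself. For the inductive step, suppose $n \ge 3$ and $|E(G)| \ge k(n-1)$. First I would ask whether $G$ itself contains $k$ edge-disjoint spanning trees; if so we are done, since each such tree has $n - 1 \ge 1$ edges and hence $G$ is nontrivial. So assume $G$ does not contain $k$ edge-disjoint spanning trees.

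Here I would invoke the Tutte--Nash-Williams spanning-tree packing theorem in its ``if and only if'' form (the exact characterization underlying the consequence quoted above): there is a partition $\mathcal{P} = \{V_1, \dots, V_t\}$ of $V(G)$ with $e_G(\mathcal{P}) < k(t-1)$, where $e_G(\mathcal{P})$ is the number of edges of $G$ joining two distinct parts; note $t \ge 2$, for $t = 1$ would give $0 < 0$. The crucial point is that the density hypothesis is inherited by one of the parts: since $|E(G)| = e_G(\mathcal{P}) + \sum_{i=1}^t |E(G[V_i])|$ and $\sum_{i=1}^t(|V_i| - 1) = n - t$, combining $|E(G)| \ge k(n-1)$ with $e_G(\mathcal{P}) < k(t-1)$ yields
\[
\sum_{i=1}^t |E(G[V_i])| = |E(G)| - e_G(\mathcal{P}) > k(n-1) - k(t-1) = k(n-t) = \sum_{i=1}^t k(|V_i| - 1).
\]
Hence some index $i$ satisfies $|E(G[V_i])| > k(|V_i| - 1)$; in particular $|E(G[V_i])| \ge 1$, which (as $G$ is loopless) forces $|V_i| \ge 2$, while $|V_i| \le n-1 < n$ because $t \ge 2$ and every part is nonempty.

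Thus $G[V_i]$ is a graph on strictly fewer vertices satisfying $|E(G[V_i])| \ge k(|V_i| - 1)$, so the induction hypothesis hands us a nontrivial subgraph of $G[V_i]$ with $k$ edge-disjoint spanning trees; being a subgraph of $G[V_i]$ it is a subgraph of $G$, and the induction is complete. I do not foresee a real obstacle here: the whole proof rests on the single counting step above --- any ``deficient'' partition must leave some part at least as dense as $G$ itself --- together with the classical packing characterization. If one prefers to avoid the ``only if'' direction of Tutte--Nash-Williams, the same bookkeeping runs equally well against the matroid union theorem applied to $k$ copies of the cycle matroid of $G$: when $G$ has no $k$ edge-disjoint spanning trees, a minimizer $F$ of the resulting rank function produces, via the components of the subgraph $(V(G),F)$, a partition of precisely the kind used above.
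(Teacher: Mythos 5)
Your proof is correct. Note that the paper does not prove this statement at all --- it is quoted as a known theorem with a citation to Nash-Williams (1964), and the rest of the paper only uses it as a black box --- so there is no in-paper argument to compare against; what you have written is a legitimate self-contained derivation from the Tutte--Nash-Williams partition characterization, which the paper cites anyway for the packing theorem. The heart of your argument, the counting step showing that a deficient partition $\mathcal{P}=\{V_1,\dots,V_t\}$ with $e_G(\mathcal{P})<k(t-1)$ forces some part to satisfy $|E(G[V_i])|>k(|V_i|-1)$, is exactly right, and the strict inequality is what gives you $|V_i|\ge 2$ (no loops) so that the induction hypothesis applies; the base case $n=2$ and the observation that $t\ge 2$ are also handled correctly, as is the degenerate $n=1$ case, which the theorem's statement implicitly excludes. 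Two small remarks: (i) you do rely on the harder (sufficiency) direction of Tutte--Nash-Williams, which is fair game here but worth stating explicitly as the external input; (ii) the induction can be packaged extremally instead --- take a vertex-minimal subgraph $H$ with $|V(H)|\ge 2$ and $|E(H)|\ge k(|V(H)|-1)$; the same counting shows no deficient partition of $H$ exists, so $H$ itself packs $k$ spanning trees --- which is essentially how the statement is usually deduced in the literature, but this is a cosmetic difference, not a gap.
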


By this theorem, if a graph $G$ contains a subgraph $H$ with $|E(H)|\ge k(|V(H)|-1)$, then $H$ contains a nontrivial subgraph $H_1$ with $k$ edge-disjoint spanning trees. Let $X$ be a subgraph of a graph $G$. The contraction $G/X$ is the graph obtained from $G$ by identifying the two ends of each edge in $X$ and then deleting the resulting loops. If $G/H_1$ also satisfies $|E(G/H_1)|\ge k(|V(G/H_1)|-1)$, then the $k$ edge-disjoint spanning trees from $G/H_1$ combined with the spanning trees from $H_1$ give $k$ edge-disjoint spanning tree for $G$.   Continue this contraction process as long as $|E(H)|\ge k(|V(H)|-1)$ for some subgraph $H$, we obtain a reduced graph $G'$, where a graph is reduced if each nontrivial subgraph $H$ satisfies $|E(H)|<k(|V(H)|-1)$. Now $G$ has $k$ edge-disjoint spanning trees as long as the reduced graph $G'$ has $k$ edge-disjoint spanning trees. Therefore, if a graph does not have $k$ edge-disjoint spanning trees, its reduced graph must be in the family $$\mathcal{G}=\{G: \text{ for each nontrivial subgraph } H\subseteq G, |E(H)|< k(|V(H)|-1)\}.$$ All graphs in $\mathcal{G}$ are sparse, and as a matter of fact, they all have maximum average degree less than $2k$. To get a condition for packing spanning trees, one only needs the condition to get rid of this sparseness.  Now it is clear that the condition ``$2k$-edge-connected'' works, since this condition gives rise of minimum degree $2k$ in the reduced graph after contracting subgraphs.

To maintain minimum degree $2k$ in the reduced graph, weaker conditions than being $2k$-edge-connected might also work.  One such option is the so-called essential edge-connectivity, first introduced by Chartrand and Stewart~\cite{CS69} to study the connectivity of line graphs. For a vertex subset or a subgraph $X$ of $G$, $G-X$ denotes the graph obtained from $G$ by deleting all vertices in $X$ and their incident edges. A graph $G$ is {\em essentially $k$-edge-connected} if $G$ has no edge cut $X$ of size less than $k$ such that $G-X$ has two nontrivial components.
%each of which has at least one edge.
Such a result has recently been obtained by Lai and Li~\cite{LL19}.

\begin{theorem}[Lai and Li~\cite{LL19}]\label{LL19}
Let $k,m$ be integers with $k+1\le m\le 2k-1$. Let $G$ be an $m$-edge-connected essentially $h$-edge-connected graph. If $$h\ge \frac{m^2}{m-k}-2=m+k+\frac{k^2}{m-k}-2,$$
then $G$ contains $k$ edge-disjoint spanning trees.
\end{theorem}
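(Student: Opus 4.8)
The natural route is the reduction sketched in the Introduction. Suppose for contradiction that $G$ has no $k$ edge-disjoint spanning trees, and let $G_0$ be a reduced graph obtained from $G$ as in the Introduction by repeatedly contracting maximal subgraphs that themselves contain $k$ edge-disjoint spanning trees, so that $G_0\in\mathcal G$. Contracting a connected subgraph only raises edge cuts, so $G_0$ is still $m$-edge-connected; and since a nontrivial component of $G_0-X$ expands to a nontrivial component of $G-X$ for every edge set $X$, $G_0$ is still essentially $h$-edge-connected. As $G$ lacks $k$ edge-disjoint spanning trees, $G_0$ is nontrivial; applying the defining inequality of $\mathcal G$ to $G_0$ itself gives $n:=|V(G_0)|\ge 3$ and $|E(G_0)|\le k(n-1)-1$, while applying it to a pair of adjacent vertices shows every parallel class of $G_0$ has fewer than $k$ edges. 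Comparing $2|E(G_0)|=\sum_v d(v)$ with the edge bound and $\delta(G_0)\ge m$, one sees that $G_0$ has at least $\lceil(2k+2)/(2k-m)\rceil\ge 2$ vertices of degree at most $2k-1$; call these \emph{light} and the rest \emph{heavy}.

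Essential $h$-edge-connectivity is used only locally. If $v$ is light, $u$ is a neighbour of $v$, and $V(G_0)\setminus\{u,v\}$ still spans an edge, then the cut separating $\{u,v\}$ from the rest of $G_0$ is essential, so
$$d(u)+d(v)-2\mu(uv)\ \ge\ h,\qquad\text{hence}\qquad d(u)\ \ge\ h-d(v)+2\mu(uv)\ >\ 2k ,$$
the last inequality because $h\ge \frac{m^{2}}{m-k}-2>4k-2$ (valid since $m\le 2k-1$). So, apart from the exceptional pairs $\{u,v\}$ for which $V(G_0)\setminus\{u,v\}$ is edgeless, every neighbour of a light vertex is heavy; in particular the light vertices form an independent set, and every heavy vertex with a light neighbour $v$ inherits the stronger bound $d(u)\ge h-d(v)+2\mu(uv)$.

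One then finishes with a global double count. Let $L$ be the set of light vertices, $H_1$ the heavy vertices having a light neighbour, and $H_2$ the rest. Since $L$ is independent, $\sum_{v\in L}d(v)$ is the number of edges between $L$ and $H_1$, so reducedness of the nontrivial subgraph $G_0[L\cup H_1]$ gives an inequality of the form $(\delta(G_0)-k)|L|<k|H_1|$; feeding the lower bounds $d(u)\ge h-d(v)+2\mu(uv)$ for $u\in H_1$ into $2|E(G_0)|=\sum_v d(v)$, together with $|E(G_0)|\le k(n-1)-1$, gives a complementary inequality of the form $(h-\delta(G_0)+2-2k)|H_1|\le(2k-\delta(G_0))|L|-2k-2$. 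Eliminating $|L|$ and $|H_1|$ from these two leaves
$$h\ <\ \delta(G_0)+2k-2+\frac{k(2k-\delta(G_0))}{\delta(G_0)-k}\ =\ \frac{\delta(G_0)^{2}}{\delta(G_0)-k}-2\ \le\ \frac{m^{2}}{m-k}-2 ,$$
the last step because $x\mapsto x^{2}/(x-k)$ is decreasing on $(k,2k)$ and $\delta(G_0)\ge m$; this contradicts the hypothesis.

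Two points are the real work. First, the degenerate pairs: when some $\{u,v\}$ as above has $V(G_0)\setminus\{u,v\}$ edgeless, every edge of $G_0$ meets $\{u,v\}$, so $(n-2)m\le|E(G_0)|\le k(n-1)-1$, which forces $n\le(2m-k-2)/(m-k)$; the few small reduced graphs left over must be eliminated directly, and this is exactly where genuinely small multigraph obstructions (fat triangles) occur, so the statement should be read for simple $G$, or with those exceptions removed. Second, and more delicate, the double count has to be run with the honest bounds $d(u)\ge h-d(v)+2\mu(uv)$ rather than the crude $d(u)\ge h-2k+3$ — coupling each vertex of $H_1$ to its lightest light neighbour, and carrying the exact value of $\delta(G_0)$ rather than just $m$ — so that the elimination lands precisely on the threshold $h=\frac{m^{2}}{m-k}-2$ and the term $\frac{k^{2}}{m-k}$ emerges; arranging the bookkeeping so that nothing is lost is where the care lies.
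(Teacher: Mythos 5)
First, note that the paper does not prove this statement at all: it is quoted from Lai and Li \cite{LL19}. The closest in-paper comparison is the proof of Theorem~\ref{main2}, which follows exactly the skeleton you propose (Nash-Williams reduction to a graph in which every nontrivial subgraph $H$ has $|E(H)|<k(|V(H)|-1)$, a local essential-cut bound forcing neighbours of low-degree vertices to have degree greater than $2k$, then a counting step), but the paper carries out the counting by \emph{discharging} -- each heavy vertex spreads its surplus $d-2k$ over its low-degree neighbours -- rather than by the single global elimination you attempt. Your reduction step, the local bound $d(u)+d(v)-2e(uv)\ge h$, the consequence that light vertices are (apart from degenerate pairs) independent with only heavy neighbours, and your caveat about small multigraph obstructions are all sound.

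The gap is in the central computation, which is precisely where the threshold $\frac{m^2}{m-k}-2$ has to emerge. Your second displayed inequality, $(h-\delta(G_0)+2-2k)|H_1|\le(2k-\delta(G_0))|L|-2k-2$, is not derivable: its left-hand side uses $d(u)\ge h-\delta(G_0)+2$ for every $u\in H_1$, which would require every such $u$ to have a light neighbour of degree exactly $\delta(G_0)$, while the right-hand side simultaneously uses $d(v)\ge\delta(G_0)$ for all light $v$; you are using $\delta(G_0)$ as both an upper and a lower bound on the light degrees. With the honest bounds ($d(u)\ge h+2-\ell(u)$ where $\ell(u)\le 2k-1$ is the degree of the lightest light neighbour of $u$, and deficiency $2k-d(v)$ per light vertex), the elimination of $|L|,|H_1|$ from your two inequalities only yields $h<3k-3+\frac{k^2}{\delta(G_0)-k}$, which does not contradict $h\ge m+k-2+\frac{k^2}{m-k}$ unless $m=2k-1$. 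So when the light degrees are heterogeneous the argument as written fails, and the "bookkeeping" you defer is in fact the heart of the proof. The repair is the per-vertex accounting the paper uses for Theorem~\ref{main2}: a light vertex $v$ of degree $d$ needs $2k-d$, each of its $d$ edges ends at a vertex of degree at least $h+2-d$ which (splitting surplus per edge, which also disposes of the parallel-class issue you ignore) returns at least $\frac{h+2-d-2k}{h+2-d}$, and $d\cdot\frac{h+2-d-2k}{h+2-d}\ge 2k-d$ is equivalent to $h\ge\frac{d^2-2d+2k}{d-k}$; since this function is decreasing on $(k,2k)$, the binding case $d=m$ gives exactly $\frac{m^2}{m-k}-2$. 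Finally, the degenerate pairs and the resulting small graphs are only waved at ("must be eliminated directly"), so even modulo the counting step the proposal does not yet establish the statement as given.
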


In this paper, we use {\em sum of degrees of adjacent vertices} (this is called Ore-degree by some authors) to break the sparseness in the reduced graph to make further improvement. To maintain such a degree sum, we naturally use the following extension of essential edge-connectivity introduced in \cite{KV21}. An edge cut $X$ of $G$ is {\em $r$-essential} if $G-X$ has two components each of which has at least $r$ edges. A graph $G$ is {\em $r$-essentially $k$-edge-connected} if it has no $r$-essential edge cuts of size less than $k$.  Clearly $1$-essentially $k$-edge-connected is equivalent to {\em essentially $k$-edge-connected}. Li and Yang~\cite{LY12} studied two edge-disjoint spanning trees by using 2-essential edge-connectivity. We obtain the following general result. Note that our bound on $h$ is slighter better than twice of the bound in Theorem~\ref{LL19}. A {\it fat-triangle} is a multigraph whose underlying simple graph is a $K_3$.

\begin{theorem}\label{main2}
Let $k+1\le m\le 2k-1$. Let $G$ be an $m$-edge-connected 2-essentially $h$-edge-connected graph that is not a $K_5$ or a fat-triangle with multiplicity at most $k-1$. If \begin{equation}\label{eq-h}
h\ge f(m,k)=\begin{cases}
2m+k-4+\frac{k(2k-1)}{2m-2k-1}, & m< k+\frac{1+\sqrt{8k+1}}{4}, \\
m+3k-4+\frac{k^2}{m-k}, &  m\ge k+\frac{1+\sqrt{8k+1}}{4},
\end{cases}
\end{equation}
then $G$ has $k$ edge-disjoint spanning trees.
\end{theorem}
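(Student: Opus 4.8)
The plan is a proof by contradiction. Among all counterexamples choose $G$ with $|V(G)|$ minimum, so $G$ is $m$-edge-connected and $2$-essentially $h$-edge-connected with $h\ge f(m,k)$, $G$ is neither $K_5$ nor a fat-triangle with multiplicity at most $k-1$, and $G$ has no $k$ edge-disjoint spanning trees. First I would show $G$ is \emph{$k$-reduced}, i.e.\ $|E(H)|<k(|V(H)|-1)$ for every nontrivial subgraph $H$. If not, Theorem~\ref{thm:nash} gives a nontrivial subgraph $H_1$ with $k$ edge-disjoint spanning trees; since $G$ has none, $H_1$ does not span $G$, and $|V(H_1)|\ge 2$. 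Set $G_1=G/H_1$. Contracting a connected subgraph preserves $m$-edge-connectivity; a $2$-essential edge cut of $G_1$ of size $<h$ pulls back to one of the same size in $G$ (the side meeting $H_1$ only gains the $\ge k\ge2$ edges of $H_1$, the other side is unchanged); and $k$ edge-disjoint spanning trees of $G_1$ combine with $k$ of $H_1$ to give $k$ of $G$. Thus $G_1$ is an $m$-edge-connected, $2$-essentially $h$-edge-connected graph with fewer vertices and no $k$ edge-disjoint spanning trees, so by minimality $G_1$ is $K_5$ or a fat-triangle with multiplicity at most $k-1$. But then the vertex $w$ of $G_1$ obtained by contracting $H_1$ has $\deg_{G_1}(w)<h$ (it equals $4$ in the $K_5$ case and is a sum of two multiplicities, each $\le k-1$, in the fat-triangle case), so $\partial_G(V(H_1))$ is an edge cut of $G$ of size $<h$ which is $2$-essential: $G[V(H_1)]$ carries the $\ge2$ edges of $H_1$, and $G-V(H_1)=G_1-w$ is connected with $\ge2$ edges (a $K_4$, or a pair of vertices joined by $\ge m-(k-1)\ge2$ parallel edges). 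This contradicts the hypothesis on $G$. Hence $G$ is $k$-reduced; in particular its multiplicity is at most $k-1$, and it has a vertex of degree at most $2k-1$ while $\delta(G)\ge m$.

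It remains to show that a $k$-reduced, $m$-edge-connected, $2$-essentially $h$-edge-connected graph $G$ with $k+1\le m\le 2k-1$ and $h\ge f(m,k)$ is $K_5$ or a fat-triangle with multiplicity at most $k-1$, which contradicts the choice of $G$. Let $n=|V(G)|$. From $|E(G)|\le k(n-1)-1$ and $\delta(G)\ge m\ge k+1$ one gets $n\ge3$; if $n=3$ then $|E(G)|\le 2k-1$ forces the underlying simple graph to be $K_3$ (a fat-path would need $|E(G)|\ge 2m\ge 2k+2$) with all multiplicities at most $k-1$, so $G$ is the desired fat-triangle. If $n=4$, any $2$-essential edge cut separates two vertex-disjoint pairs each inducing $\ge2$ edges, hence has size $\le 4(k-1)<h$; so no two such pairs exist, the pairs inducing $\ge2$ edges form an intersecting family on $K_4$, and a short count against $2m\le|E(G)|<3k$ and $\delta(G)\ge m\ge k+1$ kills every case. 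So suppose $n\ge5$; the remaining task is to force $G=K_5$, and then $m=4$, $k=3$ — the only regime in which $K_5$ satisfies the hypotheses while lacking $k$ edge-disjoint spanning trees. The method is again to locate a small $2$-essential edge cut: $k$-reducedness forbids dense subgraphs and supplies vertices of degree $\le2k-1$, and one aims to produce a small vertex set $S$ with at least two edges inside, a component of $G-S$ with at least two edges, and $|\partial_G(S)|<h$, which is impossible — except precisely when $G=K_5$. The two branches of $f(m,k)$ correspond to the two extremal obstructions to finding such an $S$ (essentially, a subgraph on about $2k$ vertices when $m$ is close to $k$, and a degree-sum configuration when $m$ is larger), and the threshold $m=k+\frac{1+\sqrt{8k+1}}{4}$ is where the two forced lower bounds on $h$ agree.

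The main obstacle is this last step. The reduction to the $k$-reduced case and the handling of $n\le 3$ are routine contraction/expansion arguments; but ruling out $n=4$ and pinning $n\ge5$ down to $K_5$ demands a careful, case-laden count that extracts a small $2$-essential edge cut from the conflict between the sparseness of all subgraphs (from $k$-reducedness) and the minimum-degree bound $\delta(G)\ge m$. The role of $f(m,k)$ is simply to be what that count needs, and getting its constants sharp — so that exactly the genuine exceptions $K_5$ and the low-multiplicity fat-triangles survive, and nothing else does — is the delicate part of the argument.
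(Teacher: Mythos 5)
Your opening reduction is fine and is essentially the paper's first step: a minimal counterexample satisfies $|E(H)|<k(|V(H)|-1)$ for every nontrivial subgraph (otherwise contract a Nash--Williams subgraph $H_1$, note $G/H_1$ inherits $m$-edge-connectivity and $2$-essential $h$-edge-connectivity, and observe that $G/H_1$ being $K_5$ or a low-multiplicity fat-triangle would expose a $2$-essential cut of size $<h$ in $G$). But everything after that --- the actual heart of the theorem --- is missing. You reduce to ``show a $k$-reduced, $m$-edge-connected, $2$-essentially $h$-edge-connected graph on $n\ge 4$ vertices must be $K_5$,'' describe the intended mechanism only as ``locate a small $2$-essential edge cut \dots which is impossible except precisely when $G=K_5$,'' and then concede that this is the main obstacle. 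That is not a proof plan with a gap; the gap \emph{is} the theorem: the constants in $f(m,k)$ must come out of an argument you never give, and your $n=4$ case is likewise only asserted (``a short count kills every case''). Moreover, the intended route is doubtful as stated: the paper does not classify reduced graphs at all, and it is not true that $k$-reducedness plus $\delta\ge m$ alone hands you the needed small $2$-essential cut.

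What the paper actually does at this point is different in kind. It keeps the full counterexample hypothesis (no $k$ edge-disjoint spanning trees, not just $k$-reducedness) and proves a structural lemma: if $uv\in E(G)$ with $d(u)+d(v)<\frac{m^2}{m-k}$, then every $w\in N(u)\cup N(v)\setminus\{u,v\}$ either has $d(w)\ge 2k$ or $G-\{u,v,w\}$ has a component with at least two edges. The proof of that lemma is where the real work (and the exceptional graphs $K_5$ and the fat-triangles) lives: assuming the conclusion fails, one explicitly constructs $k$ edge-disjoint spanning trees of $G$ from spanning stars centered in a $4$-vertex configuration, contradicting the counterexample assumption. Then a discharging argument with charges $d(v)-2k$ finishes: the $2$-essential $h$-edge-connectivity forces every neighbor of a ``light'' adjacent pair with degree sum $d$ to have degree at least $h-d+4$, and nonnegativity of the total charge reduces to $h\ge g(d):=d+k-4+\frac{k(2k-1)}{d-2k-1}$ for $2m\le d\le 4k-1$; maximizing $g$ at the endpoints $d=2m$ and $d=4k-1$ is exactly what produces the two branches of $f(m,k)$, contradicting $\sum_v(d(v)-2k)\le -2k-2$. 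None of this degree-sum/discharging machinery, nor the spanning-star constructions behind the key lemma, appears in your proposal, so the proposal as written does not establish the theorem.
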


Note that $K_5$ is a counterexample only for $m=k+1=4$, and not all fat-triangles with multiplicity at most $k-1$ are counterexamples. We feel it is not interesting to characterize exact counterexamples on three vertices. %, much work is needed. However, we felt it might not be necessary and did not make efforts to pursue it.

\medskip
For $k=2$, Zhan~\cite{Zhan91} implicitly proved that every $3$-edge-connected essentially $7$-edge-connected graph has two edge-disjoint spanning trees.  We can improve Zhan's result by decreasing essential edge-connectivity to guarantee the existence of two edge-disjoint spanning trees. %\red{In fact, Theorem~\ref{main2} indicates that every 3-edge-connected 2-essentially 10-edge-connected graph has two edge-disjoint spanning trees. By adding an extra essential 5-edge-connectivity, the 2-essentially edge-connectivity can be decreased from 10 to 8.}

\begin{theorem}\label{main1}
Every $3$-edge-connected essentially $5$-edge-connected and $2$-essentially $8$-edge-connected graph has two edge-disjoint spanning trees.
\end{theorem}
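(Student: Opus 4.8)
The plan is to follow the reduced-graph strategy behind Theorem~\ref{main2}, specialized to $k=2$ and $m=3$ (so that $f(3,2)=10$ and Theorem~\ref{main2} only yields the threshold $10$), and to show that the extra hypothesis ``essentially $5$-edge-connected'' lets one lower the $2$-essential threshold from $10$ to $8$. So I would argue by contradiction: assume some $3$-edge-connected, essentially $5$-edge-connected, $2$-essentially $8$-edge-connected graph has no two edge-disjoint spanning trees, and let $G$ be such a graph with $|V(G)|$ minimum.

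First I would reduce to the case where $G$ is its own reduced graph. Suppose $G$ has a nontrivial subgraph $H$ with $|E(H)|\ge 2(|V(H)|-1)$. By Theorem~\ref{thm:nash}, $H$ contains a nontrivial subgraph $H'$ with two edge-disjoint spanning trees, so $|V(H')|\ge 2$ and $G/H'$ has fewer vertices than $G$. Contraction preserves $3$-edge-connectivity, and an essential (resp.\ $2$-essential) edge cut of $G/H'$ of size $s$ pulls back to a cut of $G$ of size at most $s$ whose two sides still carry at least one (resp.\ at least two) edges, since $H'$ has at least two edges and lies entirely on one side; hence $G/H'$ remains essentially $5$-edge-connected and $2$-essentially $8$-edge-connected. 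Combining two edge-disjoint spanning trees of $G/H'$ with two of $H'$ would yield two in $G$, so $G/H'$ has none; since $K_5$, the fat-triangles, and all $3$-edge-connected graphs on at most three vertices have two edge-disjoint spanning trees, $G/H'$ is a strictly smaller counterexample, contradicting minimality. Therefore every nontrivial subgraph $H\subseteq G$ satisfies $|E(H)|\le 2|V(H)|-3$. In particular a pair of vertices joined by a double edge would violate this, so $G$ is simple; and $2|E(G)|=\sum_v d_G(v)\le 4|V(G)|-6$ together with $\delta(G)\ge 3$ forces $|V(G)|\ge 6$ and at least six vertices of degree $3$.

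Next I would read off the local structure of $G$ from the two essential-connectivity hypotheses by testing small vertex sets $S$, using $|\partial_G(S)|=\sum_{v\in S}d_G(v)-2e_G(S)$ and the fact that, once $|V(G)|$ exceeds a small explicit bound, $G-S$ carries at least two edges, so $\partial_G(S)$ is a $2$-essential cut whenever $e_G(S)\ge 2$ and an essential cut whenever $e_G(S)\ge 1$. This gives, for suitably large $|V(G)|$: two vertices of degree $3$ are never adjacent (the edge between them would make $\partial_G$ of the pair an essential $4$-cut); for a vertex $v$ of degree $3$ with two neighbours $a,b$ we have $e_G(\{v,a,b\})=2+[a\sim b]$ and $|\partial_G(\{v,a,b\})|=d_G(a)+d_G(b)-1-2[a\sim b]\ge 8$, so $d_G(a)+d_G(b)\ge 9$, whence $v$ has at most one neighbour of degree $4$ and at least two of degree $\ge 5$; and for $i\in\{4,5\}$ a vertex of degree $i$ with two degree-$3$ neighbours $v_1,v_2$ gives $e_G(\{u,v_1,v_2\})=2$ and $|\partial_G(\{u,v_1,v_2\})|=i+2\le 7$, impossible, so degree-$4$ and degree-$5$ vertices each have at most one degree-$3$ neighbour. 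I would also record that the three neighbours of a degree-$3$ vertex span at most two edges, and more generally that no configuration producing a subgraph $H$ with $|E(H)|\ge 2|V(H)|-2$ can occur. The finitely many graphs with $|V(G)|$ below the threshold used here are disposed of directly, since they carry too many edges for $|E(G)|\le 2|V(G)|-3$ or contain an explicit dense subgraph.

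Finally I would close with a discharging argument: assign to each vertex the charge $\mu(v)=d_G(v)-4$, so $\sum_v\mu(v)=2|E(G)|-4|V(G)|\le -6<0$. Using the structure above, each degree-$3$ vertex draws a total of $1$ from its (at least two) neighbours of degree $\ge 5$, degree-$4$ vertices stay neutral, degree-$5$ vertices lose at most $\tfrac12$, and the excess demand on degree-$6$ and degree-$7$ vertices, which may be adjacent to several degree-$3$ vertices, is absorbed by the density bound $|E(H)|\le 2|V(H)|-3$, which forces the edges leaving such a hub-like configuration to be spread over many further vertices; if the rules balance so that every vertex ends non-negative, this contradicts $\sum_v\mu(v)<0$. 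The hard part is exactly this last step: the structural consequences just derived are immediate, but $2$-essential $8$-edge-connectivity by itself does not bound the number of degree-$3$ neighbours of a degree-$6$ or degree-$7$ vertex, so the discharging rules must be tuned and the local cut estimates must be combined with the global sparsity condition to make everything close; checking the smallest graphs by hand is the remaining bookkeeping.
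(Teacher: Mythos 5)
Your overall scaffolding is the same as the paper's: take a minimal counterexample, use Theorem~\ref{thm:nash} plus contraction (checking that $3$-edge-connectivity, essential $5$-edge-connectivity and $2$-essential $8$-edge-connectivity survive contraction) to force $|E(G)|\le 2|V(G)|-3$, extract local structure from the two essential-connectivity hypotheses (no two adjacent $3$-vertices; for a $3$-vertex the other two degrees in any path through it sum to at least $9$, so $4$- and $5$-vertices have at most one $3$-neighbour), and then contradict $\sum_v(d(v)-4)\le -6$ by discharging with charge $d(v)-4$. Up to minor hand-waving (e.g.\ you assert rather than prove that the relevant triple cuts are $2$-essential; the paper does this by the count $7\ge\sum_{x\notin\{u,v,w\}}d(x)-\ell$ forcing $\ell\le 3$ and then settling $\ell=3,2$ by hand), this part is sound.

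The genuine gap is exactly where you say it is: the discharging is not closed, and the rule you sketch would not close it. Having a $3$-vertex draw its full deficit of $1$ only from its (at least two) neighbours of degree $\ge 5$ puts an unbounded demand on $6$- and $7$-vertices, since, as you note, nothing in the hypotheses bounds their number of $3$-neighbours; and the appeal to the subgraph sparsity bound $|E(H)|\le 2|V(H)|-3$ cannot absorb this, because a star with $3$-valent leaves is itself sparse, so that bound does not limit how many $3$-neighbours a $6$-vertex has. The paper avoids the issue by a different distribution: every $d$-vertex with $d\ge 6$ gives $\frac{d-4}{d}\ge\frac13$ to \emph{each} neighbour (so it can never overspend, no matter how many $3$-neighbours it has), a $5$-vertex gives $\frac13$ to its at most one $3$-neighbour and $\frac16$ to each $4$-neighbour, and a $4$-vertex gives $\frac12$ to its at most one $3$-neighbour while recouping $\frac16$ from each of its three $\ge 5$-neighbours (a two-step transfer). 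Then a $3$-vertex collects at least $\frac13$ from each of its three $\ge 4$-neighbours and every vertex ends non-negative. So the missing idea is to have $3$-vertices charge \emph{all three} of their neighbours (routing charge through $4$-vertices) and to make high-degree vertices pay proportionally $\frac{d-4}{d}$, rather than tuning rules against the global density bound, which is the step your proposal leaves open.
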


A graph is {\em Hamiltonian} if it contains a Hamilton cycle, and is {\em Hamilton-connected} if every pair of its vertices is joined by a Hamilton path.  A long-standing conjecture made by Thomassen~\cite{Thom86} states that every $4$-connected line graph is Hamiltonian.  Zhan's result~\cite{Zhan91} implies that every 7-connected line graph is Hamiltonian, making a progress towards Thomassen's conjecture. In fact, his proof implies that the graph is also Hamilton-connected. A graph $G$ is {\em essentially $k$-connected} if $G$ has no vertex cut $X$ of size less than $k$ such that $G-X$ has two nontrivial components. Extending Zhan's result, Lai et al.~\cite{LSWZ06} proved that every 3-connected essentially 11-connected line graph is Hamiltonian. %They asked what is the smallest $k$ such that every 3-connected essential $k$-connected line graph is Hamiltonian. This was extended to Hamilton-connectedness by Yang et al.~\cite{YLLG12} and then was improved to essential 10-connectedness by Li and Yang~\cite{LY12}.
This result was subsequently improved in \cite{YLLG12, LY12}.

The current best results are due to Kaiser and Vr\'ana. They proved that every $5$-connected line graph of minimum degree at least $6$ is Hamilton-connected in~\cite{KV12}, and every $3$-connected essentially $9$-connected line graph is Hamilton-connected in~\cite{KV21}. Theorem~\ref{main2} gives us the following corollary, which allows graphs to have minimum degree $5$, thus provides a valuable addition to the theorems of Kaiser and Vr\'ana~\cite{KV12, KV21}.

\begin{corollary}\label{cor:line}
Every 5-connected essentially 8-connected line graph is Hamilton-connected.
\end{corollary}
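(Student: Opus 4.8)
The plan is to pass from the line graph to its root graph, reduce to the \emph{core} of the root graph, apply Theorem~\ref{main1} (the case $k=2$) there, and then invoke the known reduction that turns two edge-disjoint spanning trees in the core into Hamilton-connectedness of the line graph.

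\textbf{Setting up the root graph and translating the hypotheses.} Let $G$ be a $5$-connected essentially $8$-connected line graph. If $G$ is complete we are done, so write $G=L(H)$ for the (unique) connected graph $H$, which is neither a star nor a triangle. Since $\kappa(G)\ge 5$ we have $\delta(G)\ge 5$, so $\deg_H(u)+\deg_H(v)\ge 7$ for every edge $uv$ of $H$, and $|E(H)|=|V(G)|\ge 6$. Two translations are then needed. First, if $H$ had an edge cut $F$ with $|F|<5$ such that $H-F$ has two components each containing an edge, then deleting the $|F|<5$ corresponding vertices of $G$ would disconnect $G$ into two nonempty parts, contradicting $5$-connectedness; hence $H$ is essentially $5$-edge-connected. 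Second, for any set $X$ of edges of $H$ we have $L(H)-X=L(H-X)$ (edge deletion), and the nontrivial components of $L(H-X)$ are exactly the line graphs of the components of $H-X$ having at least two edges; consequently an essential vertex cut of $G$ with two nontrivial sides corresponds precisely to a $2$-essential edge cut of $H$ of the same size, so $H$ is $2$-essentially $8$-edge-connected.

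\textbf{Passing to the core and verifying the hypotheses of Theorem~\ref{main1}.} Let $H_0$ be the core of $H$, obtained by iteratively deleting degree-$1$ vertices and then suppressing (smoothing) all degree-$2$ vertices. Every edge cut of $H_0$ lifts to an edge cut of $H$ of the same size (replace each suppressed edge by one edge of the path it represents), and a component of $H_0$ minus that cut with $r$ edges lifts to a component of $H$ with at least $r$ edges; hence $H_0$ inherits essential $5$-edge-connectedness and $2$-essential $8$-edge-connectedness from $H$. Since $\delta(H_0)\ge 3$, a $2$-edge-cut of $H_0$ would have to isolate a degree-$2$ vertex, which is impossible, so $H_0$ is $3$-edge-connected (the degenerate situations in which $H_0$ is trivial or has only two vertices force $H$ to be a cycle, a fat edge, or a fat triangle, and are checked directly — in those cases $L(H)$ is complete or is easily seen to be Hamilton-connected). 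Therefore Theorem~\ref{main1} applies to $H_0$, and $H_0$ has two edge-disjoint spanning trees.

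\textbf{From spanning trees in the core to Hamilton-connectedness.} It remains to deduce that $G=L(H)$ is Hamilton-connected from the facts that $H$ is essentially $5$-edge-connected (so every subgraph attached to the rest of $H$ by at most two edges carries at most one edge, i.e.\ the part of $H$ not seen by the core is tiny) and that the core $H_0$ has two edge-disjoint spanning trees. For this I would invoke the well-known reduction — implicit in Zhan's argument~\cite{Zhan91} and developed in the work of Kaiser and Vr\'ana~\cite{KV21, KV12} — that an essentially $3$-edge-connected graph whose core has two edge-disjoint spanning trees has, for every ordered pair of edges $e_1,e_2$, a dominating $(e_1,e_2)$-trail, which is equivalent to $L(H)$ being Hamilton-connected; here the two spanning trees of $H_0$ supply the connecting trails while essential $5$-edge-connectedness guarantees that the few edges outside the core are dominated. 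I expect this last step to be the main obstacle: one must quote (or reprove in the needed special case) precisely the right black box and verify that the low-degree vertices of $H$ — which the hypotheses genuinely do allow — are absorbed correctly, since $H$ itself need not be $3$-edge-connected and one really has to work with $H_0$ rather than $H$. A secondary technical point is the bookkeeping in the core-inheritance step, in particular checking that the required edge counts survive the suppression of maximal paths of degree-$2$ vertices.
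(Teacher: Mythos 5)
Your proposal follows the paper's proof essentially verbatim: translate the connectivity hypotheses from the line graph to the root graph, pass to its core, apply Theorem~\ref{main1} to obtain two edge-disjoint spanning trees there, and then convert trails in the core into Hamilton-connectedness of the line graph. The ``black box'' you leave open in the last step is exactly what the paper cites: the Catlin--Lai theorem (Theorem~\ref{thm:trail}), which turns the two edge-disjoint spanning trees of the core into spanning $(e_1,e_2)$-trails for every pair of edges (no such pair is an essential edge cut, since the core is $3$-edge-connected and essentially $5$-edge-connected), combined with Shao's core lemma (Lemma~\ref{lem:core}), which transfers this to Hamilton-connectedness of the line graph.
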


We may point out that, by using the closure concept of Ryj\'a\v cek and Vr\'ana~\cite{RV11}, Corollary~\ref{cor:line} can be generalized from line graphs to claw-free graphs.

\medskip
%Besides applications on Hamilton-connectedness of line graphs,
Lai and Li~\cite{LL19} mentioned several other applications of graphs with $k$ edge-disjoint spanning trees, such as nowhere-zero $3$-flow, circular flow, spanning connectivity of line graphs and supereulerian width of graphs. Our result gives analogue results on those properties as well.

%\medskip
In the end of this section, we introduce some notations used in the paper.
If $X, Y$ are disjoint vertex subsets or subgraphs of $G$, then $E(X, Y)$ and $e(X, Y)$ denote the set and the number of edges with one end in $X$ and the other end in $Y$, respectively. We use $e(G)$ for $|E(G)|$. If $X=\{x\}$ and $Y=\{y\}$, we also write $e(X,Y)$ as $e(xy)$, which is the number of edges between $x$ and $y$.
A $k$-vertex (resp. $k^+$-vertex, $k^-$-vertex) is a vertex of degree $k$ (resp. at least $k$, at most $k$). Similarly, let $u$ be a neighbor of a vertex $v$. We call $u$ a $k$-neighbor (resp. $k^+$-neighbor, $k^-$-neighbor) of $v$ if $u$ has degree $k$ (resp. at least $k$, at most $k$).

\section{Proof of Theorem~\ref{main2}}

Let $\mathcal{G}$ denote the family of $m$-edge-connected and $2$-essentially $h$-edge-connected graphs where $h$ is given as in \eqref{eq-h}, that are not fat-triangles with multiplicity at most $k-1$ or $K_5$. Since $$m+3k-4+\frac{k^2}{m-k}=4k-4+(m-k)+\frac{k^2}{m-k}>4k-4+2k= 6k-4$$ and
$$(2m+k-4+\frac{k(2k-1)}{2m-2k-1})- (m+3k-4+\frac{k^2}{m-k})
=\frac{2(m-2k)(m-k-\frac{1+\sqrt{8k+1})}{4})(m-k+\frac{\sqrt{8k+1}-1}{4})}{(2m-2k-1)(m-k)},$$
we have
\begin{equation}\label{h-max}
h\ge \max\{6k-4, 2m+k-4+\frac{k(2k-1)}{2m-2k-1}, m+3k-4+\frac{k^2}{m-k}\}.
\end{equation}

Suppose that $G\in \mathcal{G}$ is a counterexample to Theorem~\ref{main2} such that $|E(G)|$ is as small as possible. Then $G$ has no $k$ edge-disjoint spanning trees but any graph $G'\in \mathcal{G}$ with $|E(G')|<|E(G)|$ has $k$ edge-disjoint spanning trees.
%or $G=K_5$, or $G$ is a fat-triangle with multiplicity at most $k-1$.

By Theorem~\ref{thm:nash}, if $|E(G)|\ge k(|V(G)|-1)$, then $G$ has a nontrivial subgraph $H$ with $k$ edge-disjoint spanning trees. Notice that $G/H$ is still an $m$-edge-connected and $2$-essentially $h$-edge-connected graph. If $G/H$ is a $K_5$, then the original graph $G$ would have a $2$-essential edge cut of size at most $4<h$, and likewise, if $G/H$ is a fat-triangle of multiplicity at most $k-1$, we also obtain a $2$-essential edge cut of size at most $2k<h$ in $G$, violating the assumption of $G$.
Thus $G/H\in \mathcal{G}$, and by the minimality of $G$, $G/H$ has $k$ edge-disjoint spanning trees. Both $H$ and $G/H$ have $k$ edge-disjoint spanning trees, and this implies that $G$ also has  $k$ edge-disjoint spanning trees, a contradiction.

Thus we have $|E(G)|< k(|V(G)|-1)$, and so $|E(G)|\le k|V(G)|-k-1$. This implies that
\begin{equation}\label{sum-of-degrees}
\sum_{v\in V(G)}(d(v)-2k)\le -2k-2.
\end{equation}

\begin{lemma}\label{cut-claim}
Let $uv\in E(G)$ with $d(u)+d(v)<\frac{m^2}{m-k}$. Then for each $w\in N(u)\cup N(v)\setminus \{u,v\}$, either $d(w)\ge 2k$ or at least one component in $G-\{u,v,w\}$ has at least two edges. \end{lemma}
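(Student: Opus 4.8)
The plan is to argue by contradiction: suppose there is an edge $uv \in E(G)$ with $d(u)+d(v) < \frac{m^2}{m-k}$ and a vertex $w \in (N(u)\cup N(v))\setminus\{u,v\}$ with $d(w) < 2k$ such that \emph{both} components (there are at most two, since any additional component would give a small $2$-essential cut only if it were nontrivial) of $G-\{u,v,w\}$ have at most one edge. First I would observe that $G-\{u,v,w\}$ must in fact be disconnected: otherwise $\{u,v,w\}$ does not separate $G$, but then the edge cut $E(\{u,v,w\}, V(G)\setminus\{u,v,w\})$ together with the edges inside $\{u,v,w\}$ is not relevant; rather, I should directly count. Let me instead set $S = \{u,v,w\}$ and let $A$ be the union of components of $G-S$ containing at most one edge, so by hypothesis $V(G) = A \cup S$ and each component of $G[A]$ is a single vertex or a single edge (counting multiplicity). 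The key point is that such a graph is too sparse: I will show $G$ has a nontrivial subgraph $H$ with $|E(H)| \ge k(|V(H)|-1)$, contradicting $|E(G)| < k(|V(G)|-1)$, or else $G$ is one of the excluded small graphs, or else the $m$-edge-connectivity is violated.

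The main computation is a degree/edge count. Write $a = |A|$ and let $A$ consist of $p$ isolated vertices and $q$ single-edge components, so $a = p + 2q$. Every vertex of $A$ sends all but at most one of its edges into $S = \{u,v,w\}$, so $e(A,S) \ge \sum_{x\in A} d(x) - 2q \ge m\,a - 2q$ using $m$-edge-connectivity (each vertex has degree $\ge m$); more carefully each single vertex of $A$ contributes $\ge m$ edges to $S$ and each edge-component contributes $\ge m a' - 2$ where $a'$ is its vertex count, giving $e(A,S) \ge m a - 2q$. On the other hand $e(A,S) \le d(u) + d(v) + d(w) - 2e(uv) \le d(u)+d(v)+d(w)-2 < \frac{m^2}{m-k} + 2k - 2$ by the hypotheses $d(u)+d(v) < \frac{m^2}{m-k}$ and $d(w) < 2k$. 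Combining, $m a - 2q \le e(A,S)$, and since $2q \le a$ this forces $m a - a \le \frac{m^2}{m-k} + 2k - 2$, i.e. $(m-1)a < \frac{m^2}{m-k} + 2k - 2$. Using $m \ge k+1$ one bounds the right side to conclude $a$ is small — essentially $a \le$ a small constant. Then $|V(G)| = a + 3$ is bounded, and I can check the finitely many possibilities directly: either $|E(G)| \ge k(|V(G)|-1)$ (contradiction with the sparseness inequality derived in the excerpt), or $G$ reduces to a fat-triangle or $K_5$, which are excluded, or the $2$-essential $h$-edge-connectivity is violated by the cut separating $A$ from $S$.

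Actually a cleaner route, which I would prefer to present, uses the edge cut directly: the set of edges from $A$ to $S$ has size $e(A,S) < \frac{m^2}{m-k} + 2k - 2$. I want to compare this with $f(m,k)$. From \eqref{h-max} we know $h \ge m + 3k - 4 + \frac{k^2}{m-k} = \frac{m^2}{m-k} + 2k - 4$, which is essentially $\frac{m^2}{m-k} + 2k - 2$ up to an additive $2$; with a slightly sharper accounting of $e(A,S)$ (using that $w$ has a neighbor in $A \cup \{u,v\}$ and $uv$ is an edge, so not all of $d(u),d(v),d(w)$ point to $A$) one gets $e(A,S) < h$, and since $G[A]$ is trivial while $G - A = G[S]$ has at most $3$ vertices hence could be trivial too, the cut $E(A,S)$ would be a small $2$-essential edge cut \emph{unless} one side is trivial — but $A$ could be trivial. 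This is exactly why the statement allows the conclusion to fail when a component has $\le 1$ edge: the obstruction to deriving a contradiction is precisely the case the lemma carves out. So the honest proof must instead push the sparseness inequality \eqref{sum-of-degrees}: I would sum $d(v)-2k$ over $V(G)$, noting every vertex of $A$ contributes $\le -k + (\text{its at most one internal edge doubled})$...

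The hard part, and where I would spend the most care, is getting the edge count $e(A,S)$ tight enough and correctly handling the degenerate small cases (when $A$ is empty, when $S$ has multi-edges, when $G[A]$ has an edge-component adjacent to only one or two of $u,v,w$). The cleanest finish is almost certainly: bound $|A|$ by a constant using $(m-1)|A| \le e(A,S) + 2q \le (\text{bound}) + |A|$, enumerate, and in each surviving case exhibit either a dense subgraph contradicting $|E(G)| < k(|V(G)|-1)$, or an excluded graph, or a violation of $m$-edge-connectivity, or — and this is the genuinely new ingredient — a $2$-essential edge cut of size $< h$, which is only available precisely when \emph{both} components are nontrivial, i.e. when the lemma's conclusion would be false. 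I expect the bookkeeping in the small-case analysis, rather than any single inequality, to be the main obstacle.
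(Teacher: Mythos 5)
Your proposal is a sketch, not a proof, and the gap is exactly where you admit it is: nothing in it actually closes the argument. Two concrete problems. First, the reduction to ``finitely many cases'' is not uniform: from $(m-1)a < \frac{m^2}{m-k}+2k-2$ with $m=k+1$ you only get $a \le k+3$, so $|V(G)|$ can grow linearly with $k$, and for these cases you offer only the vague trichotomy ``dense subgraph / excluded graph / small cut'' without showing any branch applies. (Also, finding a dense proper subgraph does not by itself contradict $|E(G)|<k(|V(G)|-1)$; you would have to rerun the contraction-and-minimality argument, including the $K_5$ and fat-triangle exceptions, to turn that into a contradiction.) Second, and more fundamentally, you never use the hypothesis that $G$ is a minimal counterexample \emph{without} $k$ edge-disjoint spanning trees, which is the engine of the paper's proof of this lemma.

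The paper first proves an auxiliary claim: if some $x \notin T=\{u,v,w\}$ has at least two parallel edges to some $y \in T$ and $d(x)+d(y)<h+4$, then $E(\{x,y\},G-x-y)$ is a cut of size less than $h$ that is not $2$-essential, so every other vertex sends almost all of its edges into $\{x,y\}$, and one can explicitly assemble $k$ edge-disjoint spanning trees of $G$ from spanning stars centered at $y$ together with trees inside $G[T\cup\{x\}]$ --- contradicting the choice of $G$. Hence any such $x$ satisfies $d(x)+d(y)\ge h+4$, which supplies vertices of very large degree. For $m\ge5$ every vertex outside $T$ has at least two parallel edges into $T$, and the resulting degree surplus contradicts \eqref{sum-of-degrees}; for $m\le4$ only $(m,k)\in\{(4,3),(3,2)\}$ remain, and the paper finishes these by bounding $n$ and then either constructing edge-disjoint spanning stars or identifying $K_5$. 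Your counting inequality $e(A,S)\ge ma-2q$ is fine as far as it goes, but without the parallel-edge claim (to produce high-degree vertices against \eqref{sum-of-degrees}) and without explicit spanning-tree constructions in the residual configurations, the sketch does not terminate --- as your own ``cleaner route'' paragraph concedes when the cut $E(A,S)$ fails to be $2$-essential.
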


\begin{proof}
By symmetry of $u,v$, suppose otherwise that for some $w\in N(v)-u$,  $d(w)<2k$ and each component in $G-\{u,v,w\}$ has at most one edge. %Then each component has at most one edge, and there are at most $(n-3)/2$ edges in $G-\{u,v,w\}$. % As each vertex has at least $m-3$ neighbors in $G-\{u,v,w\}$, we have $m-3\le 1$, thus $m\le 4$. % As there are at most $(n-3)/2$ edges in $G-\{u,v,w\}$,  we have $$(m-1)(n-1)\ge k(n-1)>e(G)\ge 2+m(n-3)-(n-3)/2.$$ Thus $n<4m-5$.
%If $m=4$, then each vertex in $G-\{u,v,w\}$ is adjacent to all vertices in $\{u,v,w\}$ and another vertex, each vertex in $\{u,v,w\}$ has degree at least $1+(n-3)=n-2\ge 6\ge 2k$, a contradiction. If $m=3$, then $2(n-1)\ge k(n-1)>e(G)\ge 2+3(n-3)-(n-3)/2$, which implies that $n<7$, a contradiction.
%It follows that there are at most $t/2$ edges in $G-\{u,v,w\}$. It follows that$$h-1\ge e(\{u,v,w\}, G-\{u,v,w\})\ge \sum_{x\not=u,v,w} d(x)-t\ge mt-t.$$ So $t\le \frac{h-1}{m-1}<k+3$. Then $n<k+6$, a contradiction.

Let $T=\{u,v,w\}$. We first prove the following claim.

\begin{quote}
{\bf Claim:} for any vertex $x\not\in T$, if $e(xy)\ge 2$ for some $y\in T$, then $d(x)+d(y)\ge h+4.$
Consequently, $d(x)\ge h+4-d(y)\ge h+4-\max\{\frac{m^2}{m-k},2k\}\ge 2k$.
\end{quote}

{\bf Proof of the claim:}
Suppose otherwise that $d(x)+d(y)< h+4$. Then $E(\{x,y\}, G-x-y)$ is an edge cut of size less than $h$, which implies that each component in $G-x-y$ contains at most one edge. So each $z\in V(G-x-y)$ has at least $d(z)-1$ edges to $\{x,y\}$. Furthermore, each $z\in V(G-T-x)$ has at least $d(z)-1\ge m-1\ge k$ edges to $y$, since at most one vertex of $G-T-x$ is adjacent to $x$.

Let $G_1=G[T\cup\{x\}]$. First of all, we observe that there are at least $k$ edge-disjoint spanning stars $T_1,T_2,\cdots, T_k$ centered at $y$ in $G - (G_1-y)$. Note that each vertex in $G_1-y$ has at most one edge to $G-G_1$ since $e(\{x,y\}, G-x-y)<h$ and $e(T,G-T)<h$.

First we consider that each vertex in $G_1-y$ has no edges to $G-G_1$. As $e(\{x,y\},G_1-x-y)<h$ and $e(T,G-T)<h$, if there are multi-edges in $G_1$, then all multi-edges must share an end vertex.
%we cannot have two multiple edges with disjoint endpoints in $G_1$.
Therefore, there exists a vertex $z\in \{x,y\}$ such that each vertex in $G_1-z$ has at most one edge not to $z$. So $e(G-z)\le3$. If $e(G-z)\le1$, then each vertex in $G_1-z$ has at least $m-1$ edges to $z$. Then we can obtain  $k$ edge-disjoint spanning stars centered at $z$ in $G_1$.  If $e(G-z)=2$, then each vertex in $G_1-z$ has at least $m-2$ edges to $z$. Then we can obtain $k-1$ edge-disjoint spanning stars $S_1,S_2,\cdots,S_{k-1}$ of $G_1$ centered at $z$ in $E(z,G_1-z)$ and get the $k$-th spanning tree of $G_1$ in $E(G_1)\setminus \bigcup_{1\le i\le k-1} S_i$. If $e(G-z)=3$, then each vertex in $G_1-z$ has at least $m-2$ edges to $z$. Then we can obtain  $k-2$ edge-disjoint spanning stars $S_1,S_2,\cdots,S_{k-2}$ of $G_1$ centered at $z$ in $G_1-K_4$ and get the $(k-1)$-th and $k$-th spanning trees of $G_1$ in $K_4$. In all cases, we can obtain $k$ edge-disjoint spanning trees in $G_1$, which together with the $k$ edge-disjoint spanning stars in $G- (G_1-y)$, give $k$ edge-disjoint spanning trees in $G$, a contradiction.

So we may assume that some vertices in $G_1-y$ have one edge to $G-G_1$. By the above argument, we can obtain $k-1$ edge-disjoint spanning trees $S_1,S_2,\cdots, S_{k-1}$ in $G_1$ that contains $Z=\{z:z\in V(G_1-y) \text{ and } z \text{ has one edge } zz' \text{ to } G-G_1\}$ and one spanning tree $S_k$ in $G_1-Z$ with   $E(S_k)\subset E(G_1)\setminus \bigcup_{1\le i\le k-1} S_i$. So we can get $k$ spanning trees $S_1\cup T_1, S_2\cup T_2, \cdots, S_{k-1}\cup T_{k-1}, S_k\cup T_k\cup \{zz':z\in Z\}$ in $G$, a contradiction.
Thus $d(x)+d(y)\ge h+4$,  completing the proof of the claim. $\Box$

%Assume that $V(G)-T-x\not=\emptyset$.  Choose $x'\in V(G)-T-x$, then $e(x'y)\ge m-1\ge 2$ and for the same reason, each $z\in V(G)-x'-y$ has at least $d(z)-1\ge m-1$ edges to $\{x',y\}$. Now we start with $k$ independent edges between $x'$ and $y$, and add the rest of vertices one by one to those $k$ edges to form $k$ edge-disjoint spanning trees. %This completes the proof of the claim.

\

%We observe that $|E(T-y+x, y)|\ge h-3$ if $e(T-y,x)\ge 2$ and $V(G)-T-x\not=\emptyset$. Indeed, under the conditions, $E(T-y+x, G-(T-y+x))$ is a 2-essential edge cut, thus must have size at least $h$. As each of $T-y+x$ has at most one neighbor in $G-T-x$, $|E(T-y+x, y)|\ge h-3$.

%Now each vertex in $V(G)-T-x$ has at least $m-1$ edges to $y$, thus we can form $m-1\ge k$ stars centered at $y$ for those vertices.  That is, we can find $k$ edge-disjoint spanning paths as long as $G[T+x]$ has $k$ edge-disjoint spanning trees.

%If each of $T-y+x$ has at least $k$ edges to $y$, then we can form $k$ stars centered at $y$. We may assume that  TBA

If $|V(G)|\le3$, then $G$ must be a fat-triangle of multiplicity at most $k-1$. For otherwise, we can find $k$ edge-disjoint spanning trees in $G$ since $G$ is $m$-edge-connected, a contradiction. So we may assume that $|V(G)|\ge4$.

First assume that $m\ge 5$.  Then each vertex $x\not\in T$ has at least $m-1\ge 4$ edges to $T$, thus must have at least two edges to some vertex in $T$. By the Claim above, $d(x)\ge 2k$ for each $x\not\in T$. Let $x\not\in T$ and $y\in T$ with $e(xy)\ge 2$. Then
\begin{align*}
\sum_{z\in V(G)} (d(z)-2k)&\ge \sum_{z\in T} (d(z)-2k)+(d(x)-2k)\ge (d(x)+d(y)-4k)+\sum_{z\in T-y} (d(z)-2k)\\
&\ge (h+4-4k)+(2m-4k)\ge (6k-4+4-4k)+(2(k+1)-4k)=2,
\end{align*}
a contradiction to \eqref{sum-of-degrees}.

So we may assume that $m\le 4$. Since $k+1\le m<2k$, we have $(m,k)=(4,3)$ or $(m,k)=(3,2)$. As each vertex $z\not\in T$ has at most one neighbor in $V(G)-T$, $e(T, V(G)-T)\ge 3(n-3)-2e(G-T)$. So $k(n-1)>e(G)\ge m(n-3)-2e(G-T)+e(G-T)+2$. This implies that $e(G-T)>m(n-3)-k(n-1)+2$. From $e(G-T)\le \lfloor(n-3)/2\rfloor$,  we have
\begin{equation}\label{n}
m(n-3)-k(n-1)+2<\lfloor(n-3)/2\rfloor.
\end{equation}

Let $m=3$ and $k=2$. Then by \eqref{n}, $n\in\{4,5\}$. If $n=4$, then $e(G)\ge \frac{mn}{2}=6\ge 2(n-1)$. If $n=5$, then one vertex must have even degree by the Handshaking lemma, and thus $e(G)\ge \sum_{z\in V(G)} d(z)/2\ge (4+3(n-1))/2\ge 2(n-1)$. Both reach contradictions to \eqref{sum-of-degrees}.

Let $m=4$ and $k=3$.  Then by \eqref{n}, $n\le 9$. Note that $h=22$ and $e(G)\le 3(n-1)-1\le 23$. Recall that $uvw$ is a path in $G$. First we show that $u$ or $w$ has at most one edge to $V(G)-T$. By symmetry say $u$ has two edges to $z_1, z_2\notin T$ (it may happen that $z_1=z_2$), then $v,w$ have no neighbors in $V(G)-T-z_1-z_2$ and $e(vw)=1$, for otherwise, $E(\{u,z_1, z_2\}, V(G)-\{u,z_1,z_2\})$ is a $2$-essential edge cut of size less than $h$ since $e(G)\le 23$. Then each $z\in V(G)-T-z_1-z_2$ has at least $d(z)-1\ge m-1=3$ edges to $u$.

If $V(G)-T\not=\{z_1, z_2\}$, then choose $z\in V(G)-T-z_1-z_2$. Then none of $v,w$ is adjacent to $z_1$ or $z_2$, for otherwise, $E(\{z,u\}, V(G)-\{z,u\})$ is a $2$-essential edge cut of size less than $h$ since $e(G)\le 23$. So each vertex in $\{v,w,z_1,z_2\}$ has at least $m-1$ edges to $u$. Now we can obtain $3$ edge-disjoint spanning stars centered at $u$ in $G$, a contradiction. So we may assume that  $V(G)-T=\{z_1, z_2\}$. If $z_1=z_2$, then $w$ has at least three edges to $\{u,z_1\}$ and $v$ has at least two edges to $\{u,z_1\}$  since $\delta(G)\ge 4$ and $e(vw)=1$. This implies that $\Delta(G)\ge 5$. So $e(G)\ge \frac{4+4+4+5}{2}\ge 9 \ge 3(n-1)$, a contradiction to \eqref{sum-of-degrees}. If $z_1\ne z_2$, then $G$ contains no parallel edges, for otherwise let $e(xy)\ge2$ for any two vertices $x,y$ in $G$. Then $G-x-y$ contains at most one edge. So $e(G)\ge e(xy)+e(G-x-y)+e(\{x,y\},G-x-y)\ge 2+1+3+3+4=13> 3(n-1)$, a contradiction to \eqref{sum-of-degrees}. Since $\delta(G)\ge 4$, $G$ must be $K_5$.

Now we have each of $u$ and $w$ has at most one edge to $V(G)-T$. It follows that each vertex in $G-v$ has at least $m-1=3$ edges to $v$, and again we can obtain $3$ edge-disjoint spanning stars centered at $v$ in $G$, a contradiction.
This completes the proof of Lemma~\ref{cut-claim}.
\iffalse %%%%%%%%%%%%%%%%%%%%%%%%%%%%%%%%%%%%
Claim 2:  there are no multiple edges between two vertices in $T$.

Suppose that for some $x,y\in T$, $e(xy)\ge 2$.  Since $E(\{x,y\}, G-x-y)$ is an edge cut of size less than $h$,  each component in $G-x-y$ contains at most one edge. It implies that each $z\in G-x-y$ has at least $d(z)-1\ge m-1$ edges to $\{x,y\}$. Thus $d(x)+d(y)\ge (m-1)(n-2)+4$.

We also observe that if $e(z_1x), e(z_2y)\ge 2$ for some $z_1, z_2\in G-x-y$, then $E(\{x,z_1\}, G-x-z_1)$ and $E(\{y,z_2\}, G-y-z_2)$ are both 2-essential edge cuts, thus $d(z_1)+d(z_2)\ge 2h-(d(x)+d(y))\ge h+2k\ge 8k$. Now
\begin{align*}
\sum_{z\in V(G)} (d(z)-2k)&\ge (d(z_1)+d(z_2)-4k)+(d(x)+d(y)-4k)+(m-2k)(n-4)\\
&\ge (8k-4k)+(m-1)(n-2)+4-4k+(m-2k)(n-4)\\
&\ge 4+(2m-1-2k)(n-2)+2(m-2k)\ge 4+(n-2)-2k>-2k,
\end{align*}
a contradiction to \eqref{sum-of-degrees}.

Therefore we may assume that each $z\in G-x-y$ has at most one edge with $x$, and has at least $d(z)-2$ edges to $y$.
\fi %%%%%%%%%%
\end{proof}

We use the discharging method to show that $\sum_{v\in V(G)}(d(v)-2k)\ge0$ to reach a contradiction. Let $v\in V(G)$ have an initial charge of $\mu(v)=d(v)-2k$.
We design some discharging rules and redistribute weights accordingly.
Let $\mu^*(v)$ be the final charge after the discharging procedure.

The discharging rules are designed as follows:

\begin{enumerate}
    \item[(R1)] Each $d$-vertex with $d\ge2k$ distributes its surplus $d-2k$ evenly to all its neighbors of degree at most $2k$.
\end{enumerate}

Let $v$ be a vertex in $G$. If $d(v)\ge2k$, then by (R1) $\mu^*(v)\ge d(v)-2k-\frac{d(v)-2k}{d(v)}\cdot d(v)=0$. So we may assume that $m\le d(v)\le 2k-1$.  If each neighbor of $v$ has degree at least $t=\frac{kd(v)}{d(v)-k}$, then by (R1) $v$ gets at least $\frac{t-2k}{t}=\frac{2k-d(v)}{d(v)}$ from each neighbor. Thus $\mu^*(v)\ge d(v)-2k+\frac{2k-d(v)}{d(v)}\cdot d(v)=0$. So we may assume that $v$ has a neighbor $u$ with degree less than $t$.

First we show that $u$ has at most one neighbor with degree less than $2k$. Suppose otherwise that $u$ has a $(2k-1)^-$-neighbor $w$ distinct from $v$. Note that $d(v)+d(u)<d(v)+\frac{kd(v)}{d(v)-k}\le m+\frac{km}{m-k}=\frac{m^2}{m-k}$, where we use the property that $f(x)=\frac{kx}{x-k}+x$ is decreasing on $m\le x\le 2k$. So by Lemma~\ref{cut-claim} $G$ has an edge cut $X$ of size at most $|X|=d(v)+d(u)+d(w)-4$ such that at least two components of $G-X$ each of which has at least two edges.  However $$|X|\le \frac{kd(v)}{d(v)-k}+d(v)+2k-1-4\le\frac{km}{m-k}+m+2k-5=\frac{k^2}{m-k}+m+3k-5< h,$$ contrary to that $G$ is 2-essentially $h$-edge-connected.

Therefore by (R1), $u$ would give all its surplus to $v$ if $2k+1\le d(u)<t$, which is at least $1$ and more than $\frac{t-2k}{t}$, the amount we calculated above for $v$ to get from a vertex of degree at least $t$. So let $u$ be a neighbor of $v$ with degree at most $2k$.  Note that $d(v),d(u)\ge m$ since $G$ is $m$-edge-connected. So let $d(u)+d(v)=d$, where $2m\le d\le 4k-1$. Note that $\frac{m^2}{m-k}-4k=\frac{(m-2k)^2}{m-k}>0$ since $k+1\le m\le 2k-1$. So $d<\frac{m^2}{m-k}$. By Lemma~\ref{cut-claim}, for each neighbor $w$ of $u$ and $v$, either $d(w)\ge2k$ or at least one component in $G-\{u,v,w\}$ has at least two edges. In the latter case, since $G$ is 2-essentially $h$-edge-connected, $w$ has degree at least $h-(d-3)+1=h-d+4.$ So we have $d(w)\ge \min\{2k,h-d+4\}=2k$. Now it is enough to show that $\mu^*(v)+\mu^*(u)\ge0$. By (R1), each of $u$ and $v$ gets at least $\frac{(h-d+4)-2k}{h-d+4}$ from each neighbor with degree at least $h-d+4$. So it suffices to have the following:
$$\mu^*(u)+\mu^*(v)\ge d-4k+\frac{(h-d+4)-2k}{h-d+4}(d-2)\ge 0.$$

From above, it suffices to have  $$h\ge d-4+\frac{k(d-2)}{d-2k-1}=d+k-4+\frac{k(2k-1)}{d-2k-1}.$$

Now let $g(x)=x+k-4+\frac{k(2k-1)}{x-2k-1}$. Since $$g'(x)=1-\frac{k(2k-1)}{(x-2k-1)^2}=\frac{(x-2k-1+\sqrt{k(2k-1})(x-2k-1-\sqrt{k(2k-1)})}{(x-2k-1)^2},$$ $g$ is decreasing on $2k+1-\sqrt{k(2k-1)}\le 2m\le x\le 2k+1+\sqrt{k(2k-1)}$ and increasing on $2k+1+\sqrt{k(2k-1)}\le x\le 4k-1$. So $$ \max\{g(2m), g(4k-1)\}=\max\{2m+k-4+\frac{k(2k-1)}{2m-2k-1},\ 6k-5+\frac{k}{2k-2}\}\le h.$$
This competes the proof of Theorem~\ref{main2}.
$\hfill\square$

Theorem~\ref{main2} indicates that every 3-edge-connected 2-essentially 10-edge-connected graph has two edge-disjoint spanning trees. By adding an extra essential 5-edge-connectivity, the 2-essentially edge-connectivity can be decreased from 10 to 8. This will be proved in the next section.

\section{Proofs of Theorem~\ref{main1} and Corollary~\ref{cor:line}}

Let $\mathcal{G}$ denote the family of 3-edge-connected essentially 5-edge-connected and 2-essentially 8-edge-connected graphs. Let $G\in \mathcal{G}$ be a counterexample to Theorem~\ref{main1} such that $|E(G)|$ is as small as possible. We first prove the following lemma.

\begin{lemma}\label{lem}
The following is true about $G$:
\begin{enumerate}[(i)]
\item $\delta{(G)}\ge3$;
\item two $3$-vertices cannot be adjacent;
\item let $u,v,w\in V(G)$ such that $u,w$ are two neighbors of $v$. Then $d(u)+d(v)+d(w)\ge12$.
\end{enumerate}
\end{lemma}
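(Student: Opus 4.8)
\emph{Plan.} I would run the whole argument inside the minimal counterexample $G$, extracting structure from the edge cuts around vertex sets of size one, two and three, and then using sparsity to kill the small residual cases. First I would record two facts. (a) Exactly as at the start of the proof of Theorem~\ref{main2} with $k=2$: since contracting a nontrivial subgraph with two edge-disjoint spanning trees preserves $3$-edge-connectivity, essential $5$-edge-connectivity and $2$-essential $8$-edge-connectivity (and the exceptional graphs of Theorem~\ref{main2} do not occur for this family), the Nash--Williams reduction (Theorem~\ref{thm:nash}) together with the minimality of $G$ gives $|E(G)|<2(|V(G)|-1)$, i.e.
$$\sum_{v\in V(G)}(d(v)-4)\le -6 \qquad\text{and}\qquad |E(G)|\le 2|V(G)|-3 .$$
(b) Since $3$-edge-connectivity forces $\delta(G)\ge 3$, we have $2|E(G)|=\sum_v d(v)\ge 3|V(G)|$; combined with (a) this yields $|V(G)|\ge 6$. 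This last inequality is what lets me discard every small graph that shows up below.

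\emph{Parts (i) and (ii).} Statement (i) is immediate: if $d(v)\le 2$ for some $v$ then the edges at $v$ form an edge cut of size $\le 2$ with nonempty complement (as $|V(G)|\ge 6$), contradicting $3$-edge-connectivity. For (ii), suppose $u\sim v$ with $d(u)=d(v)=3$ and look at the cut $E(\{u,v\},V(G)\setminus\{u,v\})$, of size $6-2e(uv)$. If $e(uv)\ge 2$ this cut has size $\le 2$, again contradicting $3$-edge-connectivity. If $e(uv)=1$ the cut has size $4$ and its $\{u,v\}$-side carries the edge $uv$, hence is nontrivial; essential $5$-edge-connectivity then forces $G-\{u,v\}$ to be edgeless, so every vertex outside $\{u,v\}$ spends all $\ge 3$ of its edges on $\{u,v\}$, whence $3(|V(G)|-2)\le d(u)+d(v)-2e(uv)=4$ and $|V(G)|\le 3$, contradicting (b).

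\emph{Part (iii).} Assume $d(u)+d(v)+d(w)\le 11$, and set $T=\{u,v,w\}$, $W=V(G)\setminus T$, $n=|V(G)|$. Since $uv,vw\in E(G)$ and $u\ne w$, $G[T]$ is connected with at least two edges, so $e(G[T])\ge 2$ and the cut $X=E(T,W)$ has size $d(u)+d(v)+d(w)-2e(G[T])\le 7<8$. In $G-X$ the component spanned by $T$ is exactly $G[T]$, a single component with $\ge 2$ edges; since $G$ is $2$-essentially $8$-edge-connected, $X$ cannot be $2$-essential, so no other component of $G-X$ has two edges, i.e.\ every component of $G[W]$ has at most one edge. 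Hence $G[W]$ is a matching plus isolated vertices and $M:=e(G[W])\le\tfrac{n-3}{2}$. Plugging $e(G[T])\ge 2$, $\sum_{z\in W}d(z)\ge 3(n-3)$ and $M\le\tfrac{n-3}{2}$ into the identity $e(G)=e(G[T])+\sum_{z\in W}d(z)-M$ gives $e(G)\ge 2+\tfrac{5}{2}(n-3)$, which against the sparsity bound $e(G)\le 2n-3$ forces $n\le 5$, contradicting (b).

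\emph{Main obstacle.} Everything hinges on the bookkeeping in part (iii): one must use that $2$-essential $8$-edge-connectivity restricts each individual component of $G-T$ (not just the total edge count of $G-T$), and then combine the degree sum over $W$, the inequality $e(G[T])\ge 2$ and the matching bound on $e(G[W])$ so that the sparsity estimate $e(G)\le 2|V(G)|-3$ collapses $n$ below $6$. By contrast, parts (i), (ii) and the derivation of $|V(G)|\ge 6$ are routine cut counting.
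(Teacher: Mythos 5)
Your proposal is correct, but it finishes differently from the paper. The key structural step is the same in both: for (iii) you look at the cut $X=E(T,V(G)\setminus T)$ with $T=\{u,v,w\}$, note $|X|\le 11-2e(G[T])\le 7<8$, and use $2$-essential $8$-edge-connectivity to force every component of $G-T$ to have at most one edge. Where you diverge is the conclusion: the paper stays local, bounding $|X|\ge\sum_{x\notin T}d(x)-\ell$ to get $\ell=|V(G)\setminus T|\le 3$ and then disposing of the cases $\ell=3$ (degree count gives $d(u)+d(v)+d(w)\ge 12$) and $\ell=2$ (the graph trivially has two edge-disjoint spanning trees, so it is not a counterexample), whereas you front-load the Nash--Williams/contraction reduction from the proof of Theorem~\ref{main1} to get $e(G)\le 2|V(G)|-3$ for the minimal counterexample, deduce $|V(G)|\ge 6$ from $\delta(G)\ge 3$, and then close (ii) and (iii) with a single global edge count ($e(G)=e(G[T])+\sum_{z\in W}d(z)-M\ge 2+\tfrac52(n-3)$ against $e(G)\le 2n-3$ forces $n\le 5$). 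This is legitimate and non-circular, since the sparsity bound is derived only from Theorem~\ref{thm:nash}, minimality, and closure of $\mathcal{G}$ under contraction, none of which uses the lemma; what it buys you is a uniform finish that avoids the paper's terse small-$\ell$ case analysis (including the $\ell\le 1$ cases the paper does not even mention), and your treatment of (ii) is in fact more careful than the paper's (you handle $e(uv)\ge 2$ and justify why the outside must contain an edge). The cost is that your lemma now invokes the minimality of the counterexample and duplicates the reduction that the paper performs later in the proof of Theorem~\ref{main1}, whereas the paper's version of (iii) needs only counterexample-ness, not minimality. One presentational nit: as written, (b) assumes $\delta(G)\ge 3$ while (i) cites $|V(G)|\ge 6$ from (b); this apparent circularity is harmless because (i) needs only $|V(G)|\ge 2$ (a one-vertex graph is not a counterexample), but you should state it that way.
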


\begin{proof}
(i) Since $G$ is $3$-edge-connected, $\delta{(G)}\ge3$.

(ii) Suppose otherwise that $u,v$ are two adjacent $3$-vertices in $G$. Let $e_1,e_2,e_3,e_4$ be the four edges incident to $uv$. Since $\delta{(G)}\ge3$ by (i), $X=\{e_1,e_2,e_3,e_4\}$ is a $4$-edge cut such that at least two component of $G-X$ each of which has at least one edge, contrary to $G$ is essentially $5$-edge-connected.

(iii) Suppose otherwise that $d(u)+d(v)+d(w)\le11$.
Let $X$ be the set of edges with one end in $\{u,v,w\}$ and the other end in $V(G)-\{u,v,w\}$.
%Let $X$ be the set of edges incident to $uv$ and $vw$ but not containing $uv,vw$.
Then $|X|\le11-4=7$. We claim that $X$ is a $2$-essential edge cut, which would contradict the condition that $G$ is 2-essentially $8$-edge-connected. Suppose not. Let $\ell=|V(G)-\{u,v,w\}|$. Then $G-\{u,v,w\}$ contains no component of more than one edge, which implies that there are at most $\ell/2$ edges in $G-\{u,v,w\}$.
We now count the number of edges between $\{u,v,w\}$ and $V(G)-\{u,v,w\}$, and it follows that $$7\ge |X|\ge \sum_{x\not=u,v,w} d(x)-\ell.$$
So $3\ell\le \sum_{x\not=u,v,w} d(x)\le 7+\ell$ and we have $\ell\le 3$. When $\ell=3$, there are at least eight edges each of which has exactly one end in $\{u,v,w\}$ since $\delta(G)\ge 3$ and two $3$-vertices cannot be adjacent by Lemma~\ref{lem}(i)(ii), thus $d(u)+d(v)+d(w)\ge 8+4=12$, as desired.  If $\ell=2$, then clearly it has two edge-disjoint spanning trees, which implies the truth of Theorem~\ref{main1}.
\end{proof}

\begin{lemma}\label{main4}
$\sum_{v\in V(G)}(d(v)-4)\ge0.$
\end{lemma}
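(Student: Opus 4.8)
The plan is a discharging argument on the degrees. Give each vertex $v$ the initial charge $\mu(v)=d(v)-4$, so the claimed inequality is exactly $\sum_v\mu^*(v)\ge 0$ for the final charges; since the total charge is preserved, it suffices to redistribute so that every vertex ends nonnegative. Only $3$-vertices are deficient, $4$-vertices are neutral, and every vertex of degree $\ge 5$ has positive surplus. First I would extract the easy consequences of Lemma~\ref{lem}(iii): letting the middle vertex of the path be a $3$-vertex shows \emph{any two neighbours of a $3$-vertex have degree sum at least $9$}, so a $3$-vertex has at least two neighbours of degree $\ge 5$ and at most one of degree $4$; letting the middle vertex have degree $4$ or $5$ shows a vertex of degree $4$ or $5$ has at most one $3$-neighbour, and that a degree-$4$ vertex with a $3$-neighbour has all of its remaining neighbours of degree $\ge 5$.

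The one extra structural fact I need is: if $v$ is a $3$-vertex and $a\in N(v)$ has $d(a)=4$, then $e(va)=1$. Indeed $e(va)=3$ would make $E(\{v,a\},V(G)\setminus\{v,a\})$ an edge cut of size $d(a)-3=1$ with $\{v,a\}$ nontrivial, contradicting $3$-edge-connectivity (using $|V(G)|\ge 4$, which we may assume since every $3$-edge-connected graph on at most $3$ vertices has two edge-disjoint spanning trees). If $e(va)=2$, then $v$ has a unique third neighbour $c$ with $d(c)\ge 5$ by Lemma~\ref{lem}(iii), and since $a$ has only $d(a)-2=2$ edge-slots left, $c$ cannot route all of its $\ge 4$ non-$v$ edges into $a$; hence $G-\{v,a\}$ carries an edge and $E(\{v,a\},V(G)\setminus\{v,a\})$, of size $d(v)+d(a)-2e(va)=3<5$, is an essential edge cut, contradicting essential $5$-edge-connectivity. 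A similar use of $3$-edge-connectivity also gives $e(uv)\le 2$ whenever $d(u)=5$ and $d(v)=3$.

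Now run the rules: (R1) every $3$-vertex pulls $\tfrac13$ along each of its incident edges from the other end, hence receives $1$ in total; (R2) every $4$-vertex that has a $3$-neighbour pulls $\tfrac19$ along each of its incident edges whose other end has degree $\ge 5$. A $3$-vertex then finishes at $0$. A $4$-vertex $a$ with a $3$-neighbour $v$ has $e(va)=1$, so under (R1) it gives up exactly $\tfrac13$, while its three remaining edges all lead to vertices of degree $\ge 5$, so (R2) returns $3\cdot\tfrac19=\tfrac13$; a $4$-vertex with no $3$-neighbour is untouched. Finally let $u$ have degree $d\ge 5$, let $p$ be the number of its edges to $3$-vertices and $q$ the number of its edges to $4$-vertices that have a $3$-neighbour; then $p+q\le d$ and the loss of $u$ is $\tfrac{p}{3}+\tfrac{q}{9}\le\tfrac{d+2p}{9}$. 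For $d\ge 6$ this is at most $\tfrac d3\le d-4$; for $d=5$ we have $p\le 2$ (at most one $3$-neighbour, joined by at most a double edge), so the loss is at most $\tfrac23+\tfrac39=1=d-4$. Hence $\mu^*(u)\ge 0$ for every $u$, and summing gives $\sum_v(d(v)-4)=\sum_v\mu^*(v)\ge 0$.

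The main obstacle is the structural fact of the second paragraph: controlling edge multiplicities at $3$-vertices and their $4$-neighbours is precisely where essential $5$-edge-connectivity is used — without it a $4$-vertex joined to a $3$-vertex by a double edge would finish with charge $-\tfrac49$, and a degree-$5$ vertex with a double edge to a $3$-vertex would break the $d=5$ check — and the only mildly delicate point there is ruling out the a priori possibility that $G-\{v,a\}$ is edgeless. Once the rules are fixed, the verification of nonnegative final charge is routine arithmetic.
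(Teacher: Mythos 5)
Your proof is correct, and it is the same basic strategy as the paper's: a discharging argument on the charge $\mu(v)=d(v)-4$ driven by Lemma~\ref{lem}. The differences are in the bookkeeping. The paper uses per-neighbor rules (each $4$-vertex gives $\tfrac12$ to each $3$-neighbor, each $5$-vertex gives $\tfrac13$ to $3$-neighbors and $\tfrac16$ to $4$-neighbors, each $d$-vertex with $d\ge 6$ gives $\tfrac{d-4}{d}$ to every neighbor), whereas you use per-edge rules ($\tfrac13$ pulled along every edge into a $3$-vertex, $\tfrac19$ along edges from $5^+$-vertices into needy $4$-vertices) together with an explicit multiplicity lemma ($e(va)=1$ when $d(v)=3$, $d(a)=4$, and $e(uv)\le 2$ when $d(u)=5$, $d(v)=3$), derived from $3$-edge-connectivity and essential $5$-edge-connectivity. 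That extra step is not wasted effort: since the paper allows parallel edges, its claim that a $4$-vertex with a $3$-neighbor ``has at least three $5^+$-neighbors'' tacitly rules out a double edge to the $3$-neighbor (and more generally relies on enough distinct $5^+$-neighbors to collect $\tfrac12$), which is precisely what your structural fact justifies; your per-edge formulation also makes the verification at $5^+$-vertices a one-line computation $p/3+q/9\le d-4$. So the two arguments buy the same conclusion, with yours being somewhat more robust with respect to edge multiplicities and the paper's being marginally shorter in the rule analysis.
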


\begin{proof} We use the discharging method to show that $\sum_{v\in V(G)}(d(v)-4)\ge0$ for any $G\in\mathcal{G}$. Let $v\in V(G)$ have an initial charge of $\mu(v)=d(v)-4$.
We design some discharging rules and redistribute weights accordingly.
Once the discharging is finished, a new weight function $\mu^*(v)$ is
produced, so that the sum of all weights is kept fixed when the
discharging is in process. It is enough to prove that $\mu^*(v)\ge 0$ for all $v\in V(G)$.

The discharging rules are designed as follows:

\begin{enumerate}
    \item[(R1)] Each $4$-vertex gives $\frac{1}{2}$ to each $3$-neighbor.
    \item[(R2)] Each $5$-vertex gives $\frac{1}{3}$ to each $3$-neighbor and $\frac{1}{6}$ to each $4$-neighbor.
    \item[(R3)] Each $d$-vertex with $d\ge 6$ gives $\frac{d-4}{d}$ to each of its neighbors.
\end{enumerate}

Let $v$ be a vertex in $G$. By Lemma~\ref{lem}(i), $d(v)\ge3$. If $d(v)\ge6$, then by (R3) $\mu^*(v)\ge (d(v)-4)-\frac{d(v)-4}{d(v)}\cdot d(v)=0$. If $d(v)=5$, then by (R2) and Lemma~\ref{lem}(iii) $v$ gives $\frac{1}{3}$ to at most one $3$-neighbor and $\frac{1}{6}$ to each $4$-neighbor. So  $\mu^*(v)\ge5-4-\frac{1}{3}-\frac{1}{6}\cdot 4=0$. If $d(v)=4$, then  $v$ does not give out charge and its final charge is $4-4=0$ when $v$ has no $3$-neighbor. So we may assume that $v$ has a $3$-neighbor. Then $v$ has at least three $5^+$-neighbors by Lemma~\ref{lem}(iii), so by (R1), it gets at least $\frac{1}{6}\cdot 3=\frac{1}{2}$ and sends out $\frac{1}{2}$, so $\mu^*(v)\ge4-4-\frac{1}{2}+\frac{1}{6}\cdot3=0$. If $d(v)=3$, then by Lemma~\ref{lem}(ii) $v$ has three $4^+$-neighbors , each of which gives $v$ at least $\frac{1}{3}$. So $\mu^*(v)\ge3-4-\frac{1}{3}\cdot3=0$.
\end{proof}

\begin{proof}[\bf Proof of Theorem~\ref{main1}]
Suppose that $G\in \mathcal{G}$ is a counterexample to Theorem~\ref{main1} such that $|E(G)|$ is as small as possible. Then $G$ has no two edge-disjoint spanning trees but any graph $G'\in \mathcal{G}$ with $|E(G')|<|E(G)|$ has two edge-disjoint spanning trees.

%Nash-Williams~\cite{Nash64} proved that if $|E(G)|\ge k(|V(G)|-1)$, then $G$ has a nontrivial subgraph with $k$ edge-disjoint spanning trees.
By Theorem~\ref{thm:nash}, if $|E(G)|\ge 2(|V(G)|-1)$, then $G$ has a nontrivial subgraph $H$ with two edge-disjoint spanning trees. Since $G/H$ is still in $\mathcal{G}$, by the minimality of $G$, $G/H$ has two edge-disjoint spanning trees.
Both $H$ and $G/H$ have two edge-disjoint spanning trees, and this implies that $G$ also has two edge-disjoint spanning trees, a contradiction.
Thus we have $|E(G)|< 2(|V(G)|-1)$, and so $|E(G)|\le 2|V(G)|-3$. This implies that $\sum_{v\in V(G)}(d(v)-4)\le -6$, contrary to Lemma~\ref{main4}. %$\hfill\square$
\end{proof}

To prove Corollary~\ref{cor:line}, we need the following core and trail concepts as well as the corresponding results. For a connected graph $G$ whose line graph $L(G)$ is not a complete graph, the {\it core} of this graph $G$, denoted by $G_0$, is obtained by deleting all the vertices of degree $1$ and contracting exactly one edge $xy$ or $yz$ for each path $xyz$ in $G$ with $d_G(y) = 2$. For any $e_1, e_2\in E(G)$, an {\it $(e_1, e_2)$-trail} is a trail with end edges $e_1$ and $e_2$. A trail $T$ of $G$ is {\em dominating} in $G$ if every edge of $G$ is incident with an internal vertex of $T$. A trail $T$ of $G$ is {\em spanning} if $T$ is dominating in $G$ and $V(T)=V(G)$. We call $G$ {\em spanning trailable} if $G$ has a spanning trail.

\begin{lemma}[Shao~\cite{Shao05}]\label{lem:core}
Let $G$ be a connected and essentially $3$-edge-connected graph. Then
\begin{enumerate}[(i)]
\item $G_0$ is uniquely defined, and $\kappa'(G_0)\ge 3$;
\item if $G_0$ is spanning trailable, then the line graph $L(G)$ is Hamilton-connected.
\end{enumerate}
\end{lemma}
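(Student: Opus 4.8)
The plan is to prove the three assertions in turn, dispatching the combinatorial reduction in part (i) quickly and reserving the bulk of the effort for the trail-to-path translation in part (ii).

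For well-definedness in (i), I would first observe that the only apparent freedom in the construction is the choice, for a path $xyz$ with $d_G(y)=2$, of which of $xy, yz$ to contract; I would show this choice is immaterial, since either contraction deletes the internal vertex $y$ and leaves a single edge joining $x$ and $z$, producing isomorphic multigraphs. Iterating across a maximal path whose internal vertices all have degree $2$ then collapses it to one edge between its two non-degree-$2$ endpoints, independently of order and of the choices made; interleaving the degree-$1$ deletions changes nothing. Hence $G_0$ is exactly the multigraph whose vertices are the vertices of $G$ of degree $\ge 3$ and whose edges are the maximal internally-degree-$2$ paths of $G$, a description that is manifestly choice-free. (Essential $3$-edge-connectivity is what guarantees this reduction terminates at $\delta\ge 3$: a pendant path of length $\ge 2$ would produce a bridge with two nontrivial sides, which is excluded.)

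For $\kappa'(G_0)\ge 3$, note that by construction every vertex of $G_0$ has degree $\ge 3$. Suppose toward a contradiction that $G_0$ had an edge cut $Y$ with $|Y|\le 2$ splitting it into nonempty sides $A$ and $B$. Each edge in $Y$ is the image of a path of $G$, and selecting one genuine $G$-edge from each such path yields an edge set $X$ of $G$ with $|X|\le 2$ whose removal separates the vertices of $A$ from those of $B$ (with suppressed and deleted vertices assigned to their respective sides). Since both $A$ and $B$ contain a vertex of degree $\ge 3$, each side retains an edge of $G$ lying strictly inside it, so both sides are nontrivial and $X$ is an essential edge cut of size $\le 2$, contradicting the hypothesis. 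Thus $\kappa'(G_0)\ge 3$.

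Part (ii) is the heart of the matter and the step I expect to be the main obstacle. The bridge I would use is the classical correspondence between trails of $G$ and paths of $L(G)$: a Hamilton path of $L(G)$ joining the vertices $e_1$ and $e_2$ is equivalent to a dominating $(e_1,e_2)$-trail of $G$, so Hamilton-connectedness of $L(G)$ amounts to the existence, for every ordered pair $e_1, e_2\in E(G)$, of such a dominating trail. The purpose of passing to the core is to upgrade ``dominating in $G$'' to ``spanning in $G_0$'': deleted degree-$1$ vertices carry pendant edges that are automatically dominated by, and can be appended at the ends of, any trail through their neighbor, while each suppressed degree-$2$ vertex merges two consecutive trail-edges into one edge of $G_0$ and a vertex one must merely visit. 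I would therefore show that a spanning $(f_1,f_2)$-trail of $G_0$, for the appropriate end edges, pulls back --- by re-expanding every $G_0$-edge into its degree-$2$ path and reattaching pendant edges --- to a dominating $(e_1,e_2)$-trail of $G$, and that every ordered pair $e_1,e_2\in E(G)$ is realized this way; ranging over all pairs then delivers Hamilton-connectedness. The hard part will be the bookkeeping in this translation: reconciling spanning trailability with the reduction (the usable notion is the existence of spanning trails between all prescribed pairs of end edges, not a single spanning trail), treating prescribed end edges incident to degree-$1$ or degree-$2$ vertices that do not survive in $G_0$, and checking that the internal-vertex domination condition is genuinely preserved under contraction and pendant deletion. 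Sporadic configurations --- very short paths, multi-edges created by suppression, and the boundary case where $L(G)$ is complete --- would be handled separately, invoking essential $3$-edge-connectivity once more to rule out degenerate cuts.
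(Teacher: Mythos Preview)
The paper does not prove this lemma; it is quoted as a known result from Shao's dissertation~\cite{Shao05} and used as a black box in the proof of Corollary~\ref{cor:line}. So there is no in-paper argument to compare your proposal against.

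That said, your outline is the standard one and is essentially how Shao's original argument runs: interpret $G_0$ as the multigraph on the vertices of $G$ of degree $\ge 3$ with edges given by maximal internally-degree-$2$ paths (which is choice-free), pull back a small edge cut of $G_0$ to an essential small edge cut of $G$ for part~(i), and for part~(ii) use the Harary--Nash-Williams correspondence between dominating trails of $G$ and Hamilton paths of $L(G)$, expanding a spanning trail of $G_0$ back through the suppressed paths and reattaching pendant edges. You are also right to flag the terminology: the paper's one-line definition of ``spanning trailable'' (``has a spanning trail'') is weaker than what is actually needed and used. In the literature and in Shao's thesis the operative notion is that $G_0$ has a spanning $(e_1,e_2)$-trail for \emph{every} pair $e_1,e_2\in E(G_0)$; this is exactly what Theorem~\ref{thm:trail} supplies in the proof of Corollary~\ref{cor:line}, and it is the hypothesis under which your pull-back argument goes through.
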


%\begin{proposition}[Proposition 2.2 of \cite{LSYZ09}]\label{prop:hc}Let $G$ be a graph with $|E(G)|\ge 3$. Then $L(G)$ is Hamilton-connected if and only if for any pair of edges $e_1, e_2\in E(G)$, G has a dominating $(e_1, e_2)$-trail.\end{proposition}

\begin{theorem}[Catlin and Lai~\cite{CL91}]\label{thm:trail}
Let $G$ be a graph with two edge-disjoint spanning trees. For any $e_1, e_2\in E(G)$, $G$ has a spanning $(e_1, e_2)$-trail if and only if $\{e_1, e_2$\} is not an essential edge cut of $G$.
\end{theorem}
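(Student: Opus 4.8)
The plan is to prove the two implications separately; the ``only if'' direction is short, and the ``if'' direction is the substantive one, for which I would invoke Catlin's reduction method for collapsible graphs.

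\emph{Necessity.} Suppose $T$ is a spanning $(e_1,e_2)$-trail, and write it as $a,e_1,a',\dots,b',e_2,b$, where $a,b$ are the two ends of $T$ and $a',b'$ are the ends of $e_1,e_2$ other than $a,b$ (if $a=b$ the trail is closed, $e_1$ is then non-separating, and $\{e_1,e_2\}$ is not an edge cut). Deleting the first and last edges leaves a subtrail $T'$ inside $G-e_1-e_2$; it is connected and, since $V(T)=V(G)$, it covers $V(G)\setminus\{a,b\}$, so $V(T')$ lies in a single component $C$ of $G-e_1-e_2$. Any other component of $G-e_1-e_2$ is therefore contained in $\{a,b\}$; were it nontrivial it would consist of exactly the edge $ab$, and then $a,b$ would be met by $T$ only as its extreme ends, so $ab$ would be incident with no internal vertex of $T$ — contradicting that $T$ is \emph{spanning} (hence dominating). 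Thus $G-e_1-e_2$ has at most one nontrivial component, i.e.\ $\{e_1,e_2\}$ is not an essential edge cut.

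\emph{Sufficiency.} Assume $G$ has two edge-disjoint spanning trees and $\{e_1,e_2\}$ is not an essential edge cut. I would proceed as follows. First recall Catlin's theory: $H$ is collapsible if for every even $R\subseteq V(H)$ there is a spanning connected subgraph of $H$ with odd-degree set exactly $R$; Catlin showed that every graph with two edge-disjoint spanning trees is collapsible, and that collapsibility (and the property of having two edge-disjoint spanning trees) is preserved under contracting a connected subgraph. Next, reduce the problem: it suffices to produce a spanning connected subgraph $\Gamma$ of $G$ with $e_1,e_2\in E(\Gamma)$, with odd-degree set $\{a,b\}$ for some $a\in e_1$, $b\in e_2$, and with $\Gamma-e_1-e_2$ still spanning connected. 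Indeed, $\Gamma-e_1-e_2$ then has an Euler trail between the ends of $e_1,e_2$ other than $a,b$; prepending $e_1$ and appending $e_2$ gives a spanning $(e_1,e_2)$-trail, and because $\Gamma-e_1-e_2$ is spanning connected, $a$ and $b$ have $\Gamma$-degree at least $2$, hence are revisited, so the dominating condition holds automatically. To build $\Gamma$: contract $e_1$ and $e_2$; the result is still collapsible, so it has a spanning connected subgraph whose odd set is the pair of contracted vertices; lift this subgraph back to $G$ and re-insert $e_1,e_2$. The hypothesis that $\{e_1,e_2\}$ is not an essential edge cut is precisely what allows $\Gamma-e_1-e_2$ to be taken connected — equivalently, it rules out the configuration in which re-inserting the two edges is the only thing joining two substantial pieces. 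Finally I would settle the degenerate configurations directly: $e_1=e_2$; $e_1,e_2$ sharing an end; $e_1$ or $e_2$ lying inside a contracted collapsible block; and $\{e_1,e_2\}$ a non-essential edge cut whose thin side is a single vertex (so $e_1,e_2$ share an end of degree $2$) — in this last case one deletes that vertex, runs the argument on the rest (which still has two edge-disjoint spanning trees), and closes up the trail.

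The main obstacle is this reduction step together with the bookkeeping it forces: one must track which end of each $e_i$ serves as the trail's endpoint, prevent \emph{both} ends of the trail from having $\Gamma$-degree $1$ (which is exactly when the dominating condition could fail, and is exactly the scenario the ``not an essential edge cut'' hypothesis forbids), and handle the few small graphs that collapsibility alone does not dispose of. Pinning down ``$\{e_1,e_2\}$ is not an essential edge cut'' as the single obstruction to running the reduction cleanly is the conceptual heart of the proof.
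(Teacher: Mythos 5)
This theorem is quoted in the paper from Catlin--Lai \cite{CL91} without proof, so there is no in-paper argument to compare against; judging your proposal on its own, the necessity half is fine (and your use of the dominating condition to exclude a stray component on $\{a,b\}$ is correct), and your background facts are correct (graphs with two edge-disjoint spanning trees are collapsible, and collapsibility is preserved under contracting connected subgraphs). The problem is the sufficiency half, which is exactly where the content of the theorem lies.

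The genuine gap is the step ``the hypothesis that $\{e_1,e_2\}$ is not an essential edge cut is precisely what allows $\Gamma-e_1-e_2$ to be taken connected.'' Your construction contracts $e_1=aa'$ and $e_2=bb'$, takes a spanning connected subgraph $\Gamma'$ of $G/\{e_1,e_2\}$ with odd set the two contracted vertices, lifts it, and re-inserts $e_1,e_2$. This does give a spanning connected $\Gamma$ with odd set one end of $e_1$ and one end of $e_2$, but the lifted graph $\Gamma-e_1-e_2$ can have up to three components whose only connections run through the identifications $a\sim a'$ and $b\sim b'$, i.e.\ through the two edges you then delete. The non-essential-cut hypothesis tells you that $G-e_1-e_2$ is (in the generic case) connected; it says nothing about the particular parity subgraph handed to you by collapsibility of the contraction. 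Repairing disconnection by adding edges of $G-e_1-e_2$ flips parities at their ends, so the prescribed odd set $\{a',b'\}$ is destroyed, and no black box you invoke applies to the graph where you actually need a spanning connected parity subgraph, namely $G-e_1-e_2$: that graph need not be collapsible and need not even have two edge-disjoint spanning trees. So the reduction you state (find $H\subseteq G-e_1-e_2$ spanning, connected, with odd set one end of $e_1$ and one end of $e_2$) is the right target, but the mechanism you propose does not produce it, and producing it is the substantive work of Catlin and Lai's theorem --- you acknowledge this as ``the main obstacle,'' but it is left unproved, so the argument as written does not establish the sufficiency direction.
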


\begin{proof}[\bf Proof of Corollary~\ref{cor:line}]
Since $L(G)$ is $5$-connected and essentially $8$-connected, $G$ is essentially $5$-edge-connected and $2$-essentially $8$-edge-connected. Let $G_0$ denote the core of $G$. By the definition of core and Lemma~\ref{lem:core}(i), $G_0$ is 3-edge-connected essentially $5$-edge-connected and $2$-essentially $8$-edge-connected.

By Theorem~\ref{main1}, $G_0$ has two edge-disjoint spanning trees. By Theorem~\ref{thm:trail} and Lemma~\ref{lem:core}(ii), $L(G)$ is Hamilton-connected.
\end{proof}

\end{document}